\newcommand{\fq}{\mathbb{F}_q}
\journal{XX}
\begin{document}

\begin{frontmatter}

%% Title, authors and addresses

%% use the tnoteref command within \title for footnotes;
%% use the tnotetext command for theassociated footnote;
%% use the fnref command within \author or \affiliation for footnotes;
%% use the fntext command for theassociated footnote;
%% use the corref command within \author for corresponding author footnotes;
%% use the cortext command for theassociated footnote;
%% use the ead command for the email address,
%% and the form \ead[url] for the home page:
%% \title{Title\tnoteref{label1}}
%% \tnotetext[label1]{}
%% \author{Name\corref{cor1}\fnref{label2}}
%% \ead{email address}
%% \ead[url]{home page}
%% \fntext[label2]{}
%% \cortext[cor1]{}
%% \affiliation{organization={},
%%            addressline={}, 
%%            city={},
%%            postcode={}, 
%%            state={},
%%            country={}}
%% \fntext[label3]{}

\title{A classification of permutation binomials of the form $x^i+ax$ over $\mathbb{F}_{2^n}$ for dimensions up to $8$}

%% use optional labels to link authors explicitly to addresses:
%% \author[label1,label2]{}
%% \affiliation[label1]{organization={},
%%             addressline={},
%%             city={},
%%             postcode={},
%%             state={},
%%             country={}}
%%
%% \affiliation[label2]{organization={},
%%             addressline={},
%%             city={},
%%             postcode={},
%%             state={},
%%             country={}}

\author[1,2]{Yi Li, Xiutao Feng}\author[3]{Qiang Wang}

\affiliation[1]{organization={Key Laboratory of Mathematics Mechanization, Academy of Mathematics and Systems Science, Chinese Academy of Sciences},%Department and Organization
	%addressline={}, 
	city={Beijing},
	postcode={100190}, 
	%state={},
	country={China}}

\affiliation[2]{organization={University of Chinese Academy of Sciences},
	city={Beijing},
	postcode={100049},
	country={China}
}
\affiliation[3]{organization={School of Mathematics and Statistics, Carleton University},city={Ottawa, Ontario},
postcode={K1S 5B6},
country={Canada}}

\begin{abstract}
%% Text of abstract
 Permutation polynomials with few terms (especially permutation binomials) attract many people due to their simple algebraic structure. Despite the great interests in the study of permutation binomials, a complete characterization of permutation binomials is still unknown. In this paper, we give a classification of permutation binomials of the form $x^i+ax$ over $\mathbb{F}_{2^n}$, where $n\leq 8$ by characterizing three new classes of permutation binomials. In particular one of them has relatively large index $\frac{q^2+q+1}{3}$ over $\mathbb{F}_{q^3}$. %, which answers Problem 7 proposed by Q. Wang in CAFF2019.

\end{abstract}

\begin{keyword}
%% keywords here, in the form: keyword \sep keyword
permutation binomials \sep classification    \sep Hermite's criterion \sep the AGW criterion

%% PACS codes here, in the form: \PACS code \sep code

\MSC[2020] 11T06\sep 11T55
%% MSC codes here, in the form: \MSC code \sep code
%% or \MSC[2008] code \sep code (2000 is the default)

\end{keyword}

\end{frontmatter}

%% \linenumbers

%% main text
\newtheorem{rmk}{Remark}
\newtheorem{thm}{Theorem}
\newtheorem{case}{Case}		
\newtheorem{subcase}{Case}
\newtheorem{defn}[thm]{Definition}
\numberwithin{subcase}{case}
\newtheorem{lem}[thm]{Lemma}
\newtheorem{cor}[thm]{Corollary}

\section{Introduction}

Let $q=p^n$ be a prime power, $\mathbb{F}_{q}$ denote the finite field with $q$ elements. A polynomial $f\in \mathbb{F}_{q}[x]$ is called a $permutation \hspace*{0.4em}polynomial$ (PP) over $\mathbb{F}_q$ if the associated polynomial function $f:c\rightarrow f(c)$ from $\mathbb{F}_q$ into $\mathbb{F}_q$ is a permutation. Permutation polynomial is an active research area because of its applications in coding theory \cite{OTCCFM2013,PPACC2007}, cryptography \cite{RSA1978,PKE1998} and combinatorial designs \cite{AFSHDS2006}, and we refer the reader to \cite{PPOFFASORA2015,HFF2013,PPOFFAIA2019} for more details of the recent advances and contributions on them. 

Permutation polynomials with few terms attract many people due to their simple algebraic structure. It is well known that $x^i$ is a permutation monomial if and only if $\text{gcd}(i,q-1)=1$.  In 1993, 
 Lidl and Mullen proposed an open problem (Problem 3.1) in \cite{WDAPOAFFPTEOTF1993}:
determine conditions on $k$, $r$, and $q$ so that
$P(x) = x^k + ax^r$ permutes $\mathbb{F}_{q}$ with  $a \in\mathbb{F}_{q}^*$.
There are many papers on permutation binomials published in the past thirty years. In particular, different types of characterizations were given.  We refer the reader to 
%\cite{AW:05}, \cite{AW:06}, \cite{AW:07},
%\cite{Chou},
 %\cite{MPW}, \cite{MasudaZieve}, 
%\cite{MollinSmall}, \cite{Park}, 
%\cite{Small90}, 
%\cite{Smallbook}, 
%\cite{Turnwald}, 
%\cite{Wan1}, \cite{Wan2}, \cite{WanLidl} 
%\cite{LWang}, 
%\cite{Wang:07},
 %\cite{Zieve1}, \cite{Zieve2},
 %\cite{ACOPBOFF2013,DOATOPBOFF2015,ANRPBO2015}, \cite{NCOPBAPTOFF2017}, 
 %\cite{BPOFFWEC2021},
 \cite{BPOFFWEC2021,NCOPBAPTOFF2017,ACOPBOFF2013,DOATOPBOFF2015,ANRPBO2015,FROCPMOFF2019,CMBF2008,PACPP2015,OOCOPPOFFCT2015,SCOMCPPOFFOCT2014,SCOPPOF2015,OMCPP2016,CPPFEP2017,PPOFFH2016,OSPBATO2017,SCOCPP2014,SNROPPOFF2017,PBOFF2022,NROPBPFF2023,NASCOTCOPP2022,PBOTF2022,SCOCOFFOEC2020,CPPOFFOOC2015,PPIFPOSASCSOMOLS2013,AGLSAPB2006,STOPP1962,TNOSOASSOEIAFF1966,CMOFF1982,PB1990,PPOFF1987,PBOFF1994,OSPPOFF2005,OPOTF2007,BPOFF1988,OPPOFF1987,AFF1991,PPOPT,OCBOFF2005,PPOFTFATGS1991,CMPPOFF2007,OPP2002,SFOPPOFF2008}. Despite the great interests in the study of permutation binomials,  a complete characterization of permutation binomials is still unknown.

%%%%%%%%

Motivated by these results, we perform an exhaustive search on permutation  binomials of the form $P(x) = x^i+ax$ over $\mathbb{F}_{q}$, where $q=2^n$ such that $n\leq 12$.  Let $s=\gcd(i-1, 2^n-1)$  and $d=\frac{q-1}{s}$. Then $P(x)= x(x^{es}+a)$ for some positive integer $e$.  Here $d$ is called the index of $P(x)$ \cite{OPPOPS2009}. In general, the smaller their indices, the simpler are these PPs. Indeed, we can compute all the coefficients of these PPs of small indices using Inverse Discrete Fourier Transform. We refer to \cite{PPOFFAIA2019} for more results and details on the index of any polynomial over a finite field $\fq$. Also in \cite{PPOFFAIA2019}, Wang proposed to classify permutation polynomials by their indices and we follow this approach to study  permutation binomials of the form $x^i+a x$ over $\mathbb{F}_{2^n}$ in terms of their indices. 
Since it is easy to determine when a linearized binomial $x^{2^j} + ax$ is a PP over $\mathbb{F}_{2^n}$ (i.e.,  $a$ is not a ($2^{\gcd(j, n)}-1$)-th power), we focus on non-linearzied permutation binomials from now on. 

Below are some tables of non-linearized permutation binomial $x^i+ax$ over $\mathbb{F}_{2^n}$, where $6\leq n\leq 12$. In each table, we provide the values of $i$ such that $x^i+ax$ is permutation binomial for some $a\in \mathbb{F}_{2^n}^*$, the indices of corresponding permutation binomials, as well as the references where we can use general classes of PPs to explain these examples. In some cases, we don't have explicitly characterized classes to include these examples, instead, we provide some references for general implicit characterizations. 
We observe that most of non-linearized permutation binomials over $\fq$  have small indices, which are less than $\sqrt{q}+1$. However,  we find three permutation binomials over $\mathbb{F}_{2^{12}}$ whose indices are larger than 65, that is,  $x^{136}+ax$, $x^{271}+ax$ and $x^{1846}+ax$, which have an index $91$ and they are listed in Table \ref{Table5}. In contrast,  the indices of permutation binomials over prime fields are always less than $\sqrt{q}+1$ (see \cite{PBOFF2009}). This indicates the different behaviors of permutation binomials between prime fields and non prime fields.

Motivated by these examples, we characterize three new classes of permutation binomials over $\mathbb{F}_{2^n}$ and use them to complete the classification  of all permutation binomial of the form $x^i+ax$ over $\mathbb{F}_{2^n}$ where $n \leq 8$.
We also note that one of these new classes provides permutation binomials with relatively large  indices $\frac{q^2+q+1}{3}$ over $\mathbb{F}_{q^3}$, which can explain two of three permutation binomials of index $91$ over $\mathbb{F}_{2^{12}}$.  This also provides a new class of PPs to answer Problem 7 in \cite{PPOFFAIA2019}. % It would be interesting to continue to classify these permutation binomials over larger fields. 

We note that  there are no permutation binomials of the form $x^i+ax$ in $\mathbb{F}_{2^5}$, $\mathbb{F}_{2^7}$, and $\mathbb{F}_{2^{11}}$.
Indeed, using Corollary~3 in \cite{TNOFPB2006},  we can explain that there  is no permutation binomial  over $\mathbb{F}_{2^5}$ or $\mathbb{F}_{2^7}$ because $31,127$ are Mersenne primes. However, $2^{11}-1=23\times89$ is not a prime. So, it still remains to answer  why there is no permutation binomial of the form $x^i+ax$ over $\mathbb{F}_{2^{11}}$. %does not have any permutation binomial of the form $x^i+ax$. 

The rest of the paper is organized as follows. In Section 2, we introduce some preliminary results and  techniques such as Hermite's criterion and the AGW criterion.   In Section 3, we state and prove three new classes of permutation binomials $x^i+ax$ over $\mathbb{F}_{q^2}$, $\mathbb{F}_{q^3}$, and $\mathbb{F}_{q^4}$ respectively and a nonexistence result.  The discussion in Section 4 provides a complete explanation of all permutation binomials of the form $x^i+ax$ over $\mathbb{F}_{2^n}$ where $n\leq 8$. The conclusion and future work are given in Section 5. 

\begin{table}[h]
    \centering
     \caption{\hspace{0.5em}Non-linearized permutation binomials $x^i+ax$ over $\mathbb{F}_{2^6}$}
    \label{Table1}
    \begin{tabular}{|c|c|c|}
         \hline
         $i$&Index&Refs\\
         \hline
         10&7&\textbf{this paper}, \cite{PPIFPOSASCSOMOLS2013}\\
         \hline
         19&7&\textbf{this paper}, \cite{PPIFPOSASCSOMOLS2013}\\
         \hline
         22&3&\cite{DOATOPBOFF2015}\\
         \hline
         43&3&\textbf{this paper}\\
         \hline
         
    \end{tabular}
   
\end{table}

\begin{table}[h]
    \centering
     \caption{\hspace{0.5em}Non-linearized permutation binomials $x^i+ax$ over $\mathbb{F}_{2^8}$}
    \label{Table2}
    \begin{tabular}{|c|c|c|}
         \hline
         $i$&Index&Refs\\
         \hline
         86&3&\cite{OOCOPPOFFCT2015}\\
         \hline
         154&5&\textbf{this paper}\\
         \hline
         171&3&\textbf{this paper}\\
         \hline
    \end{tabular}
   
\end{table}

\begin{table}[h]
    \centering
     \caption{\hspace{0.5em}Non-linearized permutation binomials $x^i+ax$ over $\mathbb{F}_{2^9}$}
    \label{Table3}
    \begin{tabular}{|c|c|c|}
         \hline
         $i$&Index&Refs\\
         \hline
         74&7&\cite{PACPP2015,PPIFPOSASCSOMOLS2013}\\
         \hline
         366&7&Implicit characterization \cite{PPOFTFATGS1991, CMPPOFF2007,PPOFFAIA2019}\\
         \hline
    \end{tabular}   
\end{table}

 \begin{table}[h]
    \centering
     \caption{\hspace{0.5em}Non-linearized permutation binomials $x^i+ax$ over $\mathbb{F}_{2^{10}}$}
    \label{Table4}
    \begin{tabular}{|c|c|c|c|c|}
         \hline
         $i$&34&67&94&187\\
         \hline
         Index&31&31&11&11\\
         \hline
         Refs&\cite{PACPP2015,PPIFPOSASCSOMOLS2013}&\cite{DOATOPBAT2020,PPIFPOSASCSOMOLS2013}&\cite{DOATOPBOFF2015}&\textbf{this paper}\\
         \hline
         $i$&280&331&342&397\\
         \hline
         Index&11&31&3&31\\
         \hline
         Refs&\multicolumn{4}{|c|}{Implicit characterization \cite{PPOFTFATGS1991,CMPPOFF2007,PPOFFAIA2019}}\\
         \hline
         $i$&466&559&652&683\\
         \hline
         Index&11&11&11&3\\
         \hline
         Refs&\multicolumn{4}{|c|}{Implicit characterization \cite{PPOFTFATGS1991,CMPPOFF2007,PPOFFAIA2019}}\\
         \hline
         $i$&745&838&931&\\
         \hline
         Index&11&11&11&\\
         \hline
          Refs&\multicolumn{3}{|c|}{Implicit characterization \cite{PPOFTFATGS1991,CMPPOFF2007,PPOFFAIA2019}}&\\
          \hline
    \end{tabular}
   
\end{table}

 \begin{table}[h]
    \centering
     \caption{\hspace{0.5em}Non-linearized permutation binomials $x^i+ax$ over $\mathbb{F}_{2^{12}}$}
    \label{Table5}
    \begin{tabular}{|c|c|c|c|c|}
         \hline
         $i$&136&271&274&316\\
         \hline
         Index&91&91&15&13\\
         \hline
         Refs&\textbf{this paper}&\textbf{this paper}&\cite{PACPP2015,PPIFPOSASCSOMOLS2013}&\cite{ANRPBO2015}\\
         \hline
         $i$&547&586&631&820\\
         \hline
         Index&15&7&13&5\\
         \hline
         Refs&\multicolumn{4}{|c|}{Implicit characterization \cite{PPOFTFATGS1991, CMPPOFF2007,PPOFFAIA2019}}\\
         \hline
         $i$&946&1093&1171&1260\\
         \hline
         Index&13&15&7&13\\
         \hline
         Refs&\multicolumn{4}{|c|}{Implicit characterization \cite{PPOFTFATGS1991, CMPPOFF2007,PPOFFAIA2019}}\\
         \hline
         $i$&1366&1576&1639&1846\\
         \hline
         Index&3&15&5&91\\
         \hline
         Refs&\multicolumn{4}{|c|}{Implicit characterization \cite{PPOFTFATGS1991,CMPPOFF2007,PPOFFAIA2019}}\\
         \hline
         $i$&1890&2146&2206&2276\\
         \hline
         Index&13&21&15&9\\
         \hline
         Refs&\multicolumn{4}{|c|}{Implicit characterization \cite{PPOFTFATGS1991,CMPPOFF2007,PPOFFAIA2019}}\\
         \hline
         $i$&2341&2458&2521&2536\\
         \hline
         Index&7&5&13&21\\
         \hline
         Refs&\multicolumn{4}{|c|}{Implicit characterization \cite{PPOFTFATGS1991,CMPPOFF2007,PPOFFAIA2019}}\\
         \hline
         $i$&2731&2836&3004&3151\\
         \hline
         Index&3&13&15&13\\
         \hline
         Refs&\multicolumn{4}{|c|}{Implicit characterization \cite{PPOFTFATGS1991,CMPPOFF2007,PPOFFAIA2019}}\\
         \hline
         $i$&3277&3466&3511&3781\\
         \hline
         Index&5&13&7&13\\
         \hline
         Refs&\multicolumn{4}{|c|}{Implicit characterization \cite{PPOFTFATGS1991,CMPPOFF2007,PPOFFAIA2019}}\\
         \hline
    \end{tabular}
   
\end{table}

\iffalse
Since there are many known permutation binomials, in this paper, from a different point, we aim to classify all permutation binomials of the form $x^i+ax$ over $\mathbb{F}_{2^n}$, where $n$ is a small dimension. To achieve our goal, we construct three classes of permutation binomials over $\mathbb{F}_{2^n}$ (one of which have large indices) and obtain a nonexistence result. From this, we give a classification of permutation binomials of the form $x^i+ax$ over $\mathbb{F}_{2^n}$, when $n\leq 8$.
\fi

\section{Preliminaries}

\iffalse%%%
It is easy to know that every polynomial $P(x)$ over $\mathbb{F}_{q}$ such that $P(0)=b$ has the form $ax^rf(x^s)+b$, where $s|q-1$. There are many choices for $s$, we choose the biggest $s$, which is obviously unique for every polynomial. Let $d=\frac{q-1}{s}$, we call $d$ is the index of $P(x)$. For more details of the index of a polynomial, see \cite{PPOFFAIA2019}.
\fi%%%

The following Hermite's criterion is one of the most useful tools to characterize permutation polynomials. 

\begin{lem}\cite{H1863,FF1997} (Hermite's criterion)
    Let $\mathbb{F}_q$ be a finite field of characteristic p. Then $f\in \mathbb{F}_{q}[x]$ is a permutation polynomial of $\mathbb{F}_{q}$ if and only if the following two conditions hold:

    (i) $f$ has exactly one root in $\mathbb{F}_{q}$;

    (ii) for each integer $t$ with $1\leq t\leq q-2$ and $t\not\equiv 0\hspace{0.5em}(\text{mod}  \hspace{0.5em}p)$, the reduction of $f(x)^t\mod (x^q-x)$ has degree $\leq q-2$.
\end{lem}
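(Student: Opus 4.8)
The plan is to prove both implications by computing power sums over $\mathbb{F}_q$, using the elementary identity that $\sum_{c\in\mathbb{F}_q}c^{k}$ equals $-1$ when $k$ is a positive multiple of $q-1$ and equals $0$ otherwise (in particular $\sum_{c\in\mathbb{F}_q}1=q=0$ in $\mathbb{F}_q$). First I would record the convenient reformulation: if $g\in\mathbb{F}_q[x]$ has degree $\le q-1$ and $b_{q-1}$ is its coefficient of $x^{q-1}$, then $\sum_{c\in\mathbb{F}_q}g(c)=-b_{q-1}$, which follows by linearity from the identity above.

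For the forward direction, assuming $f$ permutes $\mathbb{F}_q$: its image is all of $\mathbb{F}_q$, so $0$ has a unique preimage, giving (i). For (ii) I would fix $t$ with $1\le t\le q-2$, let $g_t$ be the reduction of $f(x)^t$ modulo $x^q-x$ (so $\deg g_t\le q-1$ and $g_t(c)=f(c)^t$ for all $c$), and apply the reformulation: the coefficient of $x^{q-1}$ in $g_t$ equals $-\sum_{c}g_t(c)=-\sum_{c}f(c)^t=-\sum_{a\in\mathbb{F}_q}a^{t}=0$, where the reindexing uses that $f$ is a bijection and the last equality uses $1\le t\le q-2$; hence $\deg g_t\le q-2$.

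For the converse I would assume (i) and (ii), and set $p_t:=\sum_{c\in\mathbb{F}_q}f(c)^t$. By (ii) and the reformulation, $p_t=0$ whenever $1\le t\le q-2$ and $p\nmid t$. To reach all $t$ in that range, write $t=p^{s}t'$ with $p\nmid t'$ (so $1\le t'\le t\le q-2$); since $x\mapsto x^{p}$ is additive on $\mathbb{F}_q$, $p_t=\bigl(\sum_c f(c)^{t'}\bigr)^{p^{s}}=(p_{t'})^{p^{s}}=0$. Also $p_0=q=0$. Next I would count preimages: writing $N(b)=|\{c\in\mathbb{F}_q:f(c)=b\}|$ and $\overline{N(b)}$ for its image in $\mathbb{F}_q$, and using $z^{q-1}=1$ for $z\ne0$ and $0^{q-1}=0$, one has
\[
\overline{N(b)}=\sum_{c\in\mathbb{F}_q}\bigl(1-(f(c)-b)^{q-1}\bigr)=-\sum_{c\in\mathbb{F}_q}(f(c)-b)^{q-1}.
\]
Expanding $(f(c)-b)^{q-1}$ by the binomial theorem and summing over $c$ gives a combination of $p_0,\dots,p_{q-1}$ in which every term whose index is $\le q-2$ vanishes, so $\overline{N(b)}=-p_{q-1}$, independent of $b$. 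Condition (i) says $N(0)=1$, so $\overline{N(b)}=1$, i.e., $N(b)\equiv 1 \pmod{p}$, for every $b$; since each $N(b)$ is a nonnegative integer this forces $N(b)\ge1$, and $\sum_{b\in\mathbb{F}_q}N(b)=q$ then forces $N(b)=1$ for all $b$, so $f$ permutes $\mathbb{F}_q$.

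I expect the main obstacle to be the converse: after obtaining $p_t=0$ for $p\nmid t$ one must promote it to all $t\le q-2$ via the Frobenius, and then observe that the vanishing of $p_1,\dots,p_{q-2}$ alone determines $\overline{N(b)}$ only up to the single unknown constant $-p_{q-1}$ — so condition (i) has to be invoked precisely to fix that constant at $1$; the concluding integer-counting step is then routine.
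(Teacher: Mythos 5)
The paper states Hermite's criterion without proof, citing it to the literature (Lidl--Niederreiter), and your argument is exactly the standard proof given there: the identity $\sum_{c}c^{k}$, the reformulation via the $x^{q-1}$-coefficient, the Frobenius trick to extend $p_t=0$ from $p\nmid t$ to all $1\le t\le q-2$, and the preimage count $\overline{N(b)}=-p_{q-1}$ pinned down by condition (i). Your proof is correct and complete, with the one genuinely delicate point (that the power-sum conditions alone determine $N(b)$ only up to the constant $-p_{q-1}$, which is why (i) is needed) handled properly.
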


To apply Hermite's criterion on permutation binomials, we need to show the coefficient of $x^{q-1}$ in the reduction of $f(x)^t \mod (x^q-x)$ vanishes.  The following result is very useful  to compute binomials coefficients over extension fields.  

\begin{lem}(Lucas' lemma) 
    Let $n,i$ be positive integers and $p$ be a prime. Assume $n=a_mp^m+a_{m-1}p^{m-1}+...+a_1p+a_0$ and $i=b_mp^m+...+b_1p+b_0$, where $m$ is a non-negative integer and $0\le a_i, b_i<p$ for $i=0,1,\cdots, m$. Then 
    \[
    \binom{n}{i}\equiv\binom{a_m}{b_m}\binom{a_{m-1}}{b_{m-1}}...\binom{a_0}{b_0} \hspace{0.5em}(\text{mod} \hspace{0.5em}p).
    \]
\end{lem}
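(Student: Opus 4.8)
The plan is to prove Lucas' lemma by a generating-function computation in the polynomial ring $\mathbb{F}_p[x]$, comparing the coefficient of $x^i$ in $(1+x)^n$ written in two different ways.

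First I would record the ``freshman's dream'' over $\mathbb{F}_p$: since $p \mid \binom{p}{k}$ for every $k$ with $1 \le k \le p-1$ (the numerator $p!$ is divisible by $p$ while the denominator $k!(p-k)!$ is not), we have $(1+x)^p \equiv 1 + x^p \pmod{p}$ as polynomials. A short induction on $j$ then gives $(1+x)^{p^j} \equiv 1 + x^{p^j} \pmod{p}$ for all $j \ge 0$. Using the base-$p$ expansion $n = a_m p^m + \cdots + a_1 p + a_0$, this yields
\[
(1+x)^n \;=\; \prod_{j=0}^m \bigl((1+x)^{p^j}\bigr)^{a_j} \;\equiv\; \prod_{j=0}^m \bigl(1 + x^{p^j}\bigr)^{a_j} \pmod{p}.
\]
Next I would expand each factor on the right by the ordinary binomial theorem, $\bigl(1+x^{p^j}\bigr)^{a_j} = \sum_{b=0}^{a_j} \binom{a_j}{b} x^{b p^j}$, and multiply the $m+1$ factors together. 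A monomial $x^i$ arises precisely from choosing, for each $j$, an exponent $b_j$ with $0 \le b_j \le a_j$ and $\sum_{j=0}^m b_j p^j = i$; its coefficient is $\prod_{j=0}^m \binom{a_j}{b_j}$, summed over all such choices.

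The key point is that, because each $b_j$ satisfies $0 \le b_j < p$, the equation $\sum_j b_j p^j = i$ forces $(b_0,\dots,b_m)$ to be exactly the base-$p$ digits of $i$, which are unique; hence there is at most one contributing tuple, namely the one appearing in the statement (with the usual convention $\binom{a_j}{b_j}=0$ when $b_j > a_j$). Equating this with the coefficient of $x^i$ on the left-hand side $(1+x)^n$, which is $\binom{n}{i}$, gives $\binom{n}{i} \equiv \binom{a_m}{b_m}\cdots\binom{a_0}{b_0} \pmod{p}$, as claimed. The main obstacle — really the only place requiring care — is this coefficient-matching step: one must invoke uniqueness of the base-$p$ representation to rule out cross terms, and check that the boundary case in which some digit $b_j$ exceeds $a_j$ (so the product vanishes) is consistent with $\binom{n}{i} \equiv 0 \pmod{p}$. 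Everything else is a routine expansion.
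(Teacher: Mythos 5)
Your proof is correct and complete: the reduction $(1+x)^{p^j}\equiv 1+x^{p^j}\pmod p$, the expansion of $\prod_j(1+x^{p^j})^{a_j}$, and the appeal to uniqueness of the base-$p$ representation (including the degenerate case $b_j>a_j$, where both sides vanish mod $p$) together constitute the standard generating-function proof of Lucas' theorem. The paper states this lemma as a known classical result and gives no proof of its own, so there is nothing to compare against; your argument is exactly the textbook one and fills that gap correctly.
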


In recent years, the AGW criterion has become another important tool to characterize and construct new classes of permutation polynomials. 

\begin{lem}\cite{OCPOFF2011} (The AGW criterion)
    Let $A,S$ and $\bar{S}$ be finite sets with $|S|=|\bar{S}|$. Let $f, \bar{f}, \lambda, \bar{\lambda}$ be maps on finite sets such that $f: A\rightarrow A$, $\bar{f}:S\rightarrow \bar{S}$, $\lambda: A\rightarrow S$, $\bar{\lambda}: A\rightarrow \bar{S}$, and $\bar{\lambda}\circ f=\bar{f}\circ \lambda$.
\begin{center}
    \begin{tikzcd}
A \arrow[rr, "f"] \arrow[dd, "\lambda"] &  & A \arrow[dd, "\bar{\lambda}"] \\
                                        &  &                               \\
S \arrow[rr, "\bar{f}"]                 &  & \bar{S}                      
\end{tikzcd}
\end{center}

If $\lambda$ and $\bar{\lambda}$ are surjective, then the following are equivalent:

(i) $f$ is a bijection from $A$ to itself;

(ii) $\bar{f}$ is a bijection from $S$ to $\bar{S}$ and $f$ is injective on $\lambda^{-1}(s)$ for each $s\in S$.
\end{lem}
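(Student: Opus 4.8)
The plan is to prove the equivalence by a direct diagram chase, invoking finiteness of the sets at exactly two points. No heavy machinery is needed; the whole argument is elementary set theory built on the commuting square $\bar{\lambda}\circ f=\bar{f}\circ\lambda$ together with the surjectivity of $\lambda$ and $\bar{\lambda}$. Note first that surjectivity of $\lambda$ means the fibers $\lambda^{-1}(s)$, $s\in S$, form a partition of $A$ into nonempty pieces, which is the structural fact that makes condition (ii) meaningful.

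For the direction (i) $\Rightarrow$ (ii): assume $f$ is a bijection of $A$. Then $f$ is injective on all of $A$, hence a fortiori injective on each fiber $\lambda^{-1}(s)$, so the fiberwise part of (ii) is free. The substantive point is that $\bar{f}\colon S\to\bar{S}$ is a bijection; since $|S|=|\bar{S}|$ and both are finite, it suffices to show $\bar{f}$ is surjective. Given $\bar{s}\in\bar{S}$, use surjectivity of $\bar{\lambda}$ to write $\bar{s}=\bar{\lambda}(a)$ for some $a\in A$, then surjectivity of $f$ to write $a=f(b)$; the commuting relation gives $\bar{s}=\bar{\lambda}(f(b))=\bar{f}(\lambda(b))$, so $\bar{s}\in\operatorname{im}\bar{f}$. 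Hence $\bar{f}$ is surjective, and therefore bijective.

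For the direction (ii) $\Rightarrow$ (i): assume $\bar{f}$ is a bijection and $f$ is injective on each fiber $\lambda^{-1}(s)$. Because $A$ is finite, it is enough to prove $f$ is injective. Suppose $f(a_1)=f(a_2)$; applying $\bar{\lambda}$ and using the commuting relation yields $\bar{f}(\lambda(a_1))=\bar{f}(\lambda(a_2))$, and injectivity of $\bar{f}$ forces $\lambda(a_1)=\lambda(a_2)=:s$. Thus $a_1,a_2$ both lie in the single fiber $\lambda^{-1}(s)$, and injectivity of $f$ restricted to that fiber gives $a_1=a_2$. Hence $f$ is injective, hence bijective.

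There is no genuine obstacle here, only two points requiring care: in the second direction one must avoid circularly assuming $f$ is injective on $A$ before it has been established (the argument only ever uses injectivity of $f$ on individual fibers), and one must recognize that the two upgrades ``surjective $\Rightarrow$ bijective'' for $\bar{f}$ and ``injective $\Rightarrow$ bijective'' for $f$ both rely on the finiteness hypotheses $|S|=|\bar{S}|<\infty$ and $|A|<\infty$ — indeed the equivalence is false for infinite sets.
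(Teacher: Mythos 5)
Your proof is correct and complete: both directions are the standard diagram-chase argument from the cited source \cite{OCPOFF2011}, with finiteness invoked exactly where needed (to upgrade surjectivity of $\bar{f}$ and injectivity of $f$ to bijectivity). The paper itself states this lemma as a citation without proof, so there is nothing further to compare; your argument is the expected one.
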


In particular, if we let $A=\mathbb{F}_{q}^{*}$ and the maps $\lambda=\bar{\lambda}=x^s$, where $q-1=ds$ for some positive integers $s,d$, then we obtain the so-called multiplicative case of the AGW criterion.

\begin{cor}\cite{SPTOFF1997,OSPPO2009,CMPPOFF2007,PPOFTFATGS1991}
    Let $q-1=ds$, where $d$ and $s$ are two positive integers and $q$ is a prime power. Then $p(x)=x^rf(x^s)$ is a permutation polynomial over $\mathbb{F}_{q}$ if and only if $\text{gcd}(r,s)=1$ and $x^rf(x)^s$ permutes the set $\mu_{d}$ of $d$-th roots of unity.
\end{cor}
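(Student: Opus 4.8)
The plan is to derive this corollary directly from the AGW criterion (the preceding Lemma) by the multiplicative specialization already indicated: take $A=\mathbb{F}_q^{*}$, $S=\bar S=\mu_d$ the group of $d$-th roots of unity in $\mathbb{F}_q$, and $\lambda=\bar\lambda\colon x\mapsto x^s$. Since $\mathbb{F}_q^{*}$ is cyclic of order $q-1=ds$ and $s\mid q-1$, the $s$-th power map sends $\mathbb{F}_q^{*}$ onto its unique subgroup of order $d$, namely $\mu_d$, so $\lambda$ is surjective; its kernel is $\mu_s$, hence every fiber $\lambda^{-1}(y)$ is a coset of $\mu_s$ and has exactly $s$ elements. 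Throughout, $p(0)=0$, so $p$ permutes $\mathbb{F}_q$ if and only if it permutes $\mathbb{F}_q^{*}$.

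Before invoking the criterion I would dispose of the case in which $f$ has a zero $y_0\in\mu_d$: choosing $x_0\in\mathbb{F}_q^{*}$ with $x_0^s=y_0$ gives $p(x_0)=x_0^r f(x_0^s)=0=p(0)$, so $p$ is not injective, while at the same time $x^r f(x)^s$ sends $y_0$ to $0\notin\mu_d$ and hence does not permute $\mu_d$; thus both sides of the claimed equivalence fail, and we may assume from now on that $f$ has no zero on $\mu_d$. Under this assumption $p$ maps $\mathbb{F}_q^{*}$ into $\mathbb{F}_q^{*}$, and I would verify the commuting square of the AGW criterion with the natural choice $\bar f(y)=y^r f(y)^s$: writing $y=x^s$, one has $p(x)^s=(x^r f(x^s))^s=x^{rs}f(x^s)^s=y^r f(y)^s=\bar f(\lambda(x))$, so $\bar\lambda\circ p=\bar f\circ\lambda$; moreover $\bar f(y)\in\mu_d$ because $\bar f(y)^d=y^{rd}f(y)^{sd}=f(y)^{q-1}=1$, using $f(y)\neq 0$.

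Now the AGW criterion applies and asserts that $p$ is a bijection of $\mathbb{F}_q^{*}$ if and only if (a) $\bar f$ permutes $\mu_d$ and (b) $p$ is injective on each fiber $\lambda^{-1}(y)$. Condition (a) is exactly the statement that $x^r f(x)^s$ permutes $\mu_d$. For (b), fix $y\in\mu_d$ and $x_0$ with $x_0^s=y$; every element of the fiber is $x_0\zeta$ with $\zeta\in\mu_s$, and $p(x_0\zeta)=x_0^r f(y)\,\zeta^r$ with $x_0^r f(y)$ a fixed nonzero scalar, so $p$ is injective on that fiber if and only if $\zeta\mapsto\zeta^r$ is injective on the cyclic group $\mu_s$ of order $s$, i.e. if and only if $\gcd(r,s)=1$; since this condition does not depend on $y$, (b) holds for all fibers simultaneously precisely when $\gcd(r,s)=1$. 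Combining (a) and (b) gives the corollary. I do not anticipate a genuine obstacle, since the argument is a bookkeeping specialization of the AGW criterion; the only point that needs care is the degenerate case in which $f$ vanishes somewhere on $\mu_d$, because there the map $\bar f$ does not land in $\mu_d$ and the AGW square cannot be formed, so it must be treated separately as above.
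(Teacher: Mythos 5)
Your proof is correct and follows exactly the route the paper indicates: it specializes the AGW criterion with $A=\mathbb{F}_q^{*}$, $S=\bar S=\mu_d$, and $\lambda=\bar\lambda=x^s$, with the fiber analysis yielding $\gcd(r,s)=1$ and the induced map $y\mapsto y^rf(y)^s$ yielding the permutation condition on $\mu_d$. The paper cites this corollary rather than proving it, but your careful treatment of the degenerate case where $f$ vanishes on $\mu_d$ is exactly the detail needed to make the specialization rigorous.
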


As for some special form of permutation binomials, some characterizations were given. For example, a characterization of permutation binomials of the form $x^{1+\frac{q-1}{m}}+ax$ over $\mathbb{F}_q$ was given in \cite{CMOFF1982}. For the polynomials which have low indices, Wang \cite{PPOFFAIA2019} proposed an algorithm to classify all PPs over $\mathbb{F}_{q}$ of small indices explicitly in terms of their coefficients.
These results can make our tables more accurate.

\iffalse%%%%
For the proof of the third class of permutation binomials, we need the following lemma.
\begin{lem}\cite{LDPRFOFF2022}
  Every degree one $f(x)\in \mathbb{F}_{q}(x)$ permutes $\mathbb{P}^{1}(\mathbb{F}_q)$. A degree-two $f(x)\in \mathbb{F}_{q}(x)$ permutes $\mathbb{P}^{1}(\mathbb{F}_q)$ if and only if $q$ is even and $f(x)$ is equivalent to $x^2$. Specifically, there exist some degree one $\mu,\nu\in \mathbb{F}_{q}(x)$ such that $f=\mu\circ x^2\circ \nu$.
\end{lem}

\fi%%%%

%For the nonexistence result, we need some knowledge about exceptional polynomial.
An exceptional polynomial over $\mathbb{F}_q$ is a polynomial $f\in \mathbb{F}_{q}[x]$ which is a permutation polynomial on $\mathbb{F}_{q^m}$ for infinitely many $m$. In the sequel, we also need some results on exceptional polynomials.  %(give proper references)

\iffalse%%%%
\begin{defn}\cite{HFF2013}
    An exceptional polynomial over $\mathbb{F}_q$ is a polynomial $f\in \mathbb{F}_{q}[x]$ which is a permutation polynomial on $\mathbb{F}_{q^m}$ for infinitely many $m$.
\end{defn}
\fi%%%%%

\begin{lem}\cite{HFF2013}\label{PPEP}
    A permutation polynomial over $\mathbb{F}_{q}$ of degree at most $q^{\frac{1}{4}}$ is exceptional over $\mathbb{F}_{q}$.
\end{lem}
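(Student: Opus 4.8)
The plan is to deduce the statement from two classical ingredients: MacCluer's criterion for exceptionality (of the flavour recorded in \cite{HFF2013}) and the Weil bound for plane curves. MacCluer's criterion says that $f\in\mathbb{F}_q[x]$ is exceptional over $\mathbb{F}_q$ if and only if $\phi(x,y):=\frac{f(x)-f(y)}{x-y}\in\mathbb{F}_q[x,y]$ has no absolutely irreducible factor in $\mathbb{F}_q[x,y]$ other than a scalar multiple of $x-y$. So it suffices to rule out such a factor under the hypotheses that $f$ permutes $\mathbb{F}_q$ and $n:=\deg f\le q^{1/4}$.

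First I would clear away degenerate cases. If $n\le 1$ then $f$ is affine, hence a permutation of every $\mathbb{F}_{q^m}$ and trivially exceptional. If $f$ is inseparable, write $f(x)=f_1(x^{p^e})$ with $f_1\notin\mathbb{F}_q[x^p]$; since $x\mapsto x^{p^e}$ is a bijection in characteristic $p$, $f$ permutes $\mathbb{F}_{q^m}$ exactly when $f_1$ does, and $\deg f_1\le q^{1/4}$, so we may replace $f$ by $f_1$. Hence we may assume $f$ is separable and $2\le n\le q^{1/4}$; then $f'\not\equiv 0$, so $x-y\nmid\phi$, and the factor to be excluded is necessarily different from $x-y$. (Note $n\ge 2$ forces $q\ge 16$.)

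Now suppose, for contradiction, that $\psi\in\mathbb{F}_q[x,y]$ is an absolutely irreducible factor of $\phi$ of total degree $m$, $1\le m\le n-1$. As $\psi\neq c(x-y)$, the polynomial $\psi(x,x)$ is nonzero of degree $\le m$, so $\{\psi=0\}$ meets the diagonal $\{x=y\}$ in at most $m$ points. If $m\le 2$, then $\{\psi=0\}$ is a line or an absolutely irreducible conic and has at least $q-1$ affine $\mathbb{F}_q$-points, far more than $m$. If $m\ge 3$, then $q\ge (m+1)^4\ge 256$ (because $m\le q^{1/4}-1$), and the Weil bound for (possibly singular) absolutely irreducible plane curves — in the Aubry--Perret/Bombieri form, which includes the corrections for singular points and points at infinity — gives for the number $N$ of affine $\mathbb{F}_q$-points of $\{\psi=0\}$ an estimate of the shape
\[
N\ \ge\ q+1-(m-1)(m-2)\sqrt q-c_m,\qquad c_m\le m^2,
\]
and substituting $m\le q^{1/4}-1$ yields $N\ge 1+5q^{3/4}-7\sqrt q$, which for $q\ge 256$ is far larger than $m$. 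In either case $\{\psi=0\}$ carries an affine $\mathbb{F}_q$-point $(a,b)$ with $a\neq b$; since $\psi\mid\phi$ and $f(x)-f(y)=(x-y)\phi(x,y)$, we get $f(a)=f(b)$ with $a\neq b$, contradicting that $f$ permutes $\mathbb{F}_q$. Thus $\phi$ has no absolutely irreducible $\mathbb{F}_q$-factor, and $f$ is exceptional by MacCluer's criterion.

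The step I expect to be the genuine obstacle is the input MacCluer's criterion itself, in particular its direction ``$\phi$ has no absolutely irreducible $\mathbb{F}_q$-factor $\Rightarrow$ $f$ exceptional'': its proof has to track the Frobenius orbits of the geometric components of $\phi$ (all of size $\ge 2$ once the rational ones are removed), choose infinitely many extension degrees $m$ not divisible by any of those orbit sizes, and argue that over each such $\mathbb{F}_{q^m}$ the curve $\{\phi=0\}$ still has no absolutely irreducible rational component and hence no off-diagonal $\mathbb{F}_{q^m}$-point. If one simply cites this criterion from \cite{HFF2013}, then everything else is routine: the only things to verify are the explicit shape of the Weil estimate and the elementary conic/line count used for $m\le 2$, and in the range $m\ge 3$ (which forces $q\ge 256$) the inequality closes with a huge margin.
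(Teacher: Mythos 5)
The paper does not prove this lemma; it is quoted verbatim from the Handbook of Finite Fields \cite{HFF2013}, so there is no internal proof to compare against. Your argument is the standard one (essentially von zur Gathen's proof that a permutation polynomial of degree $n$ over $\mathbb{F}_q$ with $q\ge n^4$ is exceptional), and it is correct as written. The two external inputs are exactly the ones you flag: the Cohen--MacCluer criterion, whose nontrivial direction (``no absolutely irreducible $\mathbb{F}_q$-rational factor of $(f(x)-f(y))/(x-y)$ implies exceptional'') you rightly leave as a citation --- and note that your reduction to separable $f$ also renders the ``$\ne c(x-y)$'' caveat harmless, since $x-y\mid\phi$ precisely when $f'\equiv 0$; and a Weil-type bound for possibly singular absolutely irreducible plane curves. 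The exact correction term $c_m$ differs between references (Aubry--Perret, Leep--Yeomans, Cafure--Matera all give $c_m=O(m^2)$), but as you observe nothing hinges on the constant: with $m\le q^{1/4}-1$ and $q\ge 256$ the inequality $N>m$ closes with an enormous margin, and the low-degree cases $m\le 2$ are handled correctly by the line/conic count (a singular conic is a union of lines over $\overline{\mathbb{F}}_q$, hence not absolutely irreducible, so the conic is a rational curve with $q+1$ projective points and at least $q-1$ affine ones). So your proposal is a complete and correct proof of the cited lemma, modulo the two standard references, and it is the proof one would find behind the citation.
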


\begin{lem}(Carlitz-Wan conjecture/theorem)\cite{HFF2013,PFCW11995,EPOAT1997}
    Exceptional polynomials over $\mathbb{F}_q$ have degree coprime to $q-1$. 
\end{lem}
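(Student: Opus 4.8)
\emph{Proof strategy.} The plan is to reduce the statement to a geometric fact about the curve $\varphi(x,y):=\frac{f(x)-f(y)}{x-y}=0$ and then read off an absolutely irreducible component from the behaviour of that curve at infinity. Arguing by contraposition, suppose $f\in\mathbb{F}_q[x]$ has degree $n$ with $\gcd(n,q-1)>1$; I want to show $f$ permutes $\mathbb{F}_{q^m}$ for only finitely many $m$. Two harmless normalizations: replacing $f$ by $\alpha f(\beta x+\gamma)+\delta$ ($\alpha,\beta\in\mathbb{F}_q^{*}$) changes neither $n$ nor the relevant set of $m$; and if $e$ is maximal with $f(x)=g(x^{p^{e}})$ (where $p:=\operatorname{char}\mathbb{F}_q$), then $g'\neq 0$, the map $x\mapsto x^{p^{e}}$ is a bijection of every $\mathbb{F}_{q^m}$ so $f$ and $g$ permute the same fields, and $\gcd(\deg g,q-1)=\gcd(n,q-1)$ since $p\nmid q-1$. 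Hence I may assume $f$ is monic with $f'\neq 0$; fix a prime $\ell\mid\gcd(n,q-1)$, so $\ell\neq p$. Now $\varphi\in\mathbb{F}_q[x,y]$ and $\varphi(x,x)=f'(x)\neq 0$. If $\varphi$ had an absolutely irreducible factor $\psi\in\mathbb{F}_q[x,y]$, the Weil bound for curves would give $\#\psi(\mathbb{F}_{q^m})\geq q^m-c_1q^{m/2}-c_2$ with $c_1,c_2$ depending only on $n$, while $\psi(x,x)$, which divides $\varphi(x,x)=f'(x)$, has at most $n-1$ roots; so for every sufficiently large $m$ the curve $\psi$, hence $\varphi$, would have an $\mathbb{F}_{q^m}$-point off the diagonal, i.e.\ $f(a)=f(b)$ with $a\neq b$ in $\mathbb{F}_{q^m}$. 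Thus it suffices to produce an absolutely irreducible factor of $\varphi$ over $\mathbb{F}_q$.

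To do that I would analyse $\{\varphi=0\}$ at infinity. Write $t=f(x)$, $u=1/x$, $s=1/t$; monicity gives $s=u^{n}v(u)$ with $v\in\mathbb{F}_q[[u]]$ a unit. Assume first $p\nmid n$ (the tame case). Then $\mathbb{F}_q(x)/\mathbb{F}_q(t)$ is tamely ramified of index $n$ at $t=\infty$, and Abhyankar's lemma shows that the normalization of $\{\varphi=0\}$ has exactly $n-1$ places over $t=\infty$, one for each $n$-th root of unity $\zeta\neq 1$, namely the one along which $y/x\to\zeta$; the $q$-power Frobenius permutes these places by $\zeta\mapsto\zeta^{q}$. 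Pick $\zeta_0\in\mu_n$ of exact order $\ell$ (possible as $\ell\mid n$); since $\ell\mid q-1$, $\zeta_0^{q}=\zeta_0$, so the $\zeta_0$-place $P$ is $\mathbb{F}_q$-rational. On the normalization $P$ lies on a unique irreducible component $Z$ of $\{\varphi=0\}\otimes\overline{\mathbb{F}_q}$; as $\mathrm{Frob}(Z)$ contains $\mathrm{Frob}(P)=P$, necessarily $\mathrm{Frob}(Z)=Z$, so $Z$ is defined over $\mathbb{F}_q$, and the $\mathbb{F}_q$-irreducible factor of $\varphi$ cut out by $Z$ is absolutely irreducible. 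This settles the case $p\nmid n$.

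The step I expect to be the main obstacle is the wild case $p\mid n$. Writing $n=p^{a}m$ with $p\nmid m$, the branches of $\{\varphi=0\}$ at infinity now see only $m$-th roots of unity as tangent directions (and the direction $\zeta=1$ itself acquires branches of $\{\varphi=0\}$), the direction of a primitive $\ell$-th root of unity $\zeta_0\in\mathbb{F}_q^{*}$ (with $\ell\mid m$) can be shared by several branches, and a crude Frobenius-fixed-point count on those branches may fail. One then needs a finer local study---via the ramification filtration, or the Newton--Puiseux expansions, at infinity---to single out a branch that is genuinely Frobenius-stable, exploiting that the relevant low-order data (the root of unity $\zeta_0\in\mathbb{F}_q$ together with the leading coefficients of $f$, which lie in $\mathbb{F}_q$) is Frobenius-invariant; this is the technical heart of the elementary proof of the Carlitz--Wan conjecture due to Lenstra, as written up by Cohen and Fried. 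A route that bypasses the local analysis is to classify exceptional polynomials through their arithmetic and geometric monodromy groups (Fried--Guralnick--Saxl), which however invokes the classification of finite simple groups. In every approach the operative phenomenon is the one displayed above: a linear factor $x-\zeta_0 y$ of the leading form of $\varphi$ with $\zeta_0\in\mathbb{F}_q^{*}$ of order $\ell$ forces an absolutely irreducible factor of $\varphi$ over $\mathbb{F}_q$.
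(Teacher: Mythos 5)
The paper does not actually prove this lemma: it is imported as a black-box citation (the Carlitz--Wan conjecture, proved by Fried--Guralnick--Saxl and, elementarily, by Lenstra as written up by Cohen and Fried), so there is no in-paper argument to compare against. Your proposal correctly reconstructs the standard strategy from that literature: argue by contraposition, reduce to a monic separable $f$, aim to produce an absolutely irreducible factor of $\varphi(x,y)=(f(x)-f(y))/(x-y)$ over $\mathbb{F}_q$, use the Weil bound together with $\varphi(x,x)=f'(x)\neq 0$ to get off-diagonal points in $\mathbb{F}_{q^m}$ for all large $m$, and find the factor by exhibiting a Frobenius-fixed place of $\{\varphi=0\}$ over $t=\infty$ attached to a primitive $\ell$-th root of unity $\zeta_0\in\mathbb{F}_q^{*}$ for a prime $\ell\mid\gcd(n,q-1)$. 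The normalizations and the tame case $p\nmid n$ are handled correctly.

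As a self-contained proof, however, the proposal has a genuine gap exactly where you flag it: the wild case $p\mid n$. The reduction $f(x)=g(x^{p^{e}})$ removes only inseparability, not divisibility of $\deg g$ by $p$, so one can still have $p\mid\deg g$ and $\ell\mid\deg g$ simultaneously (e.g.\ degree $2p$ over odd $q$); there the places over $t=\infty$ and the Frobenius action on them are no longer governed by $\mu_n$ alone, and the naive fixed-point count can fail. That case is precisely what kept Carlitz's conjecture open and is the technical heart of Lenstra's argument (a finer analysis of the tame quotient of inertia at infinity), or alternatively of the Fried--Guralnick--Saxl monodromy classification. Since you defer that entire step to the cited sources, what you have is an accurate roadmap of the known proof rather than a complete proof --- which, to be fair, is the same status the lemma has in the paper itself, where it is simply quoted.
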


\section{Three new classes of permutation binomials  and a nonexistence result}

Let $q=2^n$. In this section we characterize three  new classes of permutation binomials $x^i+ax$ over $\mathbb{F}_{q^2}$, $\mathbb{F}_{q^3}$, and $\mathbb{F}_{q^4}$ respectively.  We also prove that there is no permutation binomial of the form $x^{2(q^3+q^2+q+1)+1} +a x$ over $\mathbb{F}_{q^4}$ with index $q-1$, where $m$ even and $m\geq 4$.  %whose index is $q-1$. 

\subsection{Permutation binomials over $\mathbb{F}_{q^2}$}
%To explain $x^{43}+ax$'s permutation behavior in $\mathbb{F}_{2^6}$, we generalize it into an infinite class of permutation binomials.

In \cite{PPOFIFBRFOSOTMG2013}, Zieve proved $x^{r(q-1)+1}+ax$ is a PP of $\mathbb{F}_{q^2}$ if and only if $(-a)^{\frac{q+1}{\text{gcd}(r,q+1)}}\neq 1$ and $\text{gcd}(r-1,q-1)=1$ under the assumption that $a^{q+1}=1$. In \cite{ACOPBOFF2013,DOATOPBOFF2015,ANRPBO2015}, Hou and Lappano used Hermite’s criterion to determine several explicit classes of permutation binomials with the forms $x^{2q-1}+ax$, $x^{3q-2}+ax$, $x^{5q-4}+ax$ and $x^{7q-6}+ax$ over $\mathbb{F}_{q^2}$. Later, in \cite{PPOFFH2016}, Hou proved that let $r>2$ be a fixed prime, under the assumption $a^{q+1}\neq 1(a\in \mathbb{F}_{q}^{*})$, there are only finitely many $(q,a)$ for which $x^{r(q-1)+1}+ax$ is a PP of $\mathbb{F}_{q^2}$.  Motivated by these results, Li et al \cite{NCOPBAPTOFF2017} completely characterized when $x^r(x^{q-1}+a)$ can be a permutation binomial. In \cite{BPOFFWEC2021}, sufficient conditions on $x^r(x^{3(q-1)}+a)$ were given. Very recently, in \cite{NROPBPFF2023}, Hou and Lavorante gave two nonexistence results about permutation binomials of the form $x^r(x^{d(q-1)}+a)$ over $\mathbb{F}_{q^2}$. % (need to rewrite). 

The following new class of permutation binomials over $\mathbb{F}_{q^2}$ is motivated by the permutation binomial $x^{43}+ax$ over $\mathbb{F}_{2^6}$ and $x^{187}+ax$ over $\mathbb{F}_{2^{10}}$.  We use both Hermite's criterion and the AGW criterion in the proof.  %(comment on Hou's results)

\begin{thm}\label{T3}
    Let $q=2^n$, $n\geq 3$ is odd, and $a\in \mathbb{F}_{q^2}^{*}$. Then the polynomial
    \[
    f_1(x)=x^{6q-5}+ax
    \]
    is a permutation polynomial in $\mathbb{F}_{q^2}$ if and only if one of the following occurs
    \begin{enumerate}
        \item $n\geq 5$, $a\in \mu_{q+1}$ and $a\not \in \mu_{\frac{q+1}{3}}$;
        \item $n=3$, $a\in \{\gamma^3, \gamma^6, \gamma^7, \gamma^{12}, \gamma^{14}, \gamma^{24}, \gamma^{27}, \gamma^{28}$, $\gamma^{33}, \gamma^{35}, \gamma^{45}, \gamma^{48}, \gamma^{49}, \gamma^{54}, \gamma^{56}$\}, where $\gamma$ is a primitive element in $\mathbb{F}_{2^6}$.
    \end{enumerate}
\end{thm}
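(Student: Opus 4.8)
The plan is to use the multiplicative case of the AGW criterion (Corollary stated above). Write $f_1(x) = x^{6q-5} + ax = x(x^{6q-6} + a) = x(x^{6(q-1)} + a)$. Here the relevant modulus is $q^2 - 1$; since $n$ is odd, $\gcd(3, q-1) = 1$ and one checks $\gcd(6, q^2-1) = 3$, so write $6(q-1) = \frac{q^2-1}{d}$ with index $d = \frac{q^2-1}{6(q-1)} = \frac{q+1}{6}\cdot\frac{q-1}{q-1}$ — more precisely $d$ divides $q+1$, and the roots of unity to examine live in $\mu_d$ where $d = \gcd(6(q-1), q^2-1) $ adjusted appropriately. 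In any case, by the Corollary, $f_1$ permutes $\mathbb{F}_{q^2}$ iff $\gcd(1, s) = 1$ (automatic) and $g(x) := x \cdot (x^{6(q-1)} + a)^{s}$ permutes $\mu_d$. Since for $\zeta \in \mu_{q+1}$ we have $\zeta^{q-1} = \zeta^{-2}$, the map $x^{6(q-1)}$ on $\mu_{q+1}$ becomes $x^{-12}$, reducing everything to a one-variable problem on the cyclic group $\mu_{q+1}$ of order $q+1$. First I would carry out this reduction carefully, getting an explicit rational/monomial map on $\mu_{q+1}$ whose bijectivity is equivalent to $f_1$ being a PP.

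For the forward direction (necessity), I would apply Hermite's criterion: the coefficient of $x^{q^2-1}$ in $f_1(x)^t \bmod (x^{q^2}-x)$ must vanish for all admissible $t$. Expanding $f_1(x)^t = \sum_{j} \binom{t}{j} a^{t-j} x^{(6q-5)j + (t-j)}$ and using Lucas' lemma to track which binomial coefficients survive mod $2$, I would extract constraints forcing $a^{q+1} = 1$ (i.e. $a \in \mu_{q+1}$) and then the extra condition $a \notin \mu_{(q+1)/3}$; the case $n = 3$ is special because $q+1 = 9$ is small, so $\mu_{(q+1)/3} = \mu_3$ and the exceptional list of $15$ powers of $\gamma$ must be checked by hand (or verified as exactly the elements of $\mu_9 \setminus \mu_3$, noting $\gamma^7$ generates $\mu_9$ and the listed exponents are $\{3k : 1 \le k \le 9\} \cup \dots$ intersected appropriately — I'd reconcile the list with $\mu_9 \setminus \mu_3$ translated to $\mathbb{F}_{2^6}^*$). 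For sufficiency, assuming $a \in \mu_{q+1} \setminus \mu_{(q+1)/3}$, I would show the induced monomial-times-shift map on $\mu_{q+1}$ is a bijection, which for $n \ge 5$ amounts to a $\gcd$ condition plus a norm-type condition of the shape "$-a$ is not a cube in $\mu_{q+1}$", matching $a \notin \mu_{(q+1)/3}$.

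The main obstacle I expect is the sufficiency direction: showing that the reduced map on $\mu_{q+1}$ actually permutes $\mu_{q+1}$ rather than merely being a candidate. The map will not be a pure monomial (because of the additive shift by $a$), so I'd likely need a second application of AGW or a direct counting/fixed-point argument, or else rephrase bijectivity of $x \mapsto x(x^{-12}+a)^s$ on $\mu_{q+1}$ as the absence of nontrivial solutions to a certain equation in $\mu_{q+1}$, solved using that $3 \nmid$ the order-structure coming from $a \notin \mu_{(q+1)/3}$. A secondary technical nuisance is the bookkeeping in Hermite's criterion: identifying precisely which $t$ produce a nonzero $x^{q^2-1}$-coefficient requires careful base-$2$ digit analysis via Lucas' lemma, and separating the generic behavior ($n \ge 5$) from the degenerate small case ($n = 3$, where $q^2 - 1 = 63$ and exhaustive verification is cleanest). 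I would handle $n = 3$ essentially by direct computation and present the general $n \ge 5$ argument via the AGW reduction.
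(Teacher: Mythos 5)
Your overall strategy (AGW reduction to $\mu_{q+1}$ for sufficiency, Hermite plus Lucas for necessity, direct computation for $n=3$) is the same as the paper's, but there is a genuine gap at exactly the point you flag as "the main obstacle," and it is resolved by an observation you have missed rather than by the extra machinery you anticipate. Writing $f_1(x)=x(x^{6(q-1)}+a)$ and applying the multiplicative AGW criterion with $s=q-1$, the reduced map on $\mu_{q+1}$ is $x(x^6+a)^{q-1}$. When $a\in\mu_{q+1}$ and $a\notin\mu_{(q+1)/3}$ (so that $x^6+a$ has no zero on $\mu_{q+1}$), this map \emph{is} a pure monomial: since $\zeta^q=\zeta^{-1}$ for $\zeta\in\mu_{q+1}$ and $a^q=a^{-1}$, one gets
\[
x(x^6+a)^{q-1}=\frac{x\,(x^{-6}+a^{-1})}{x^6+a}=a^{-1}x^{-5},
\]
which permutes $\mu_{q+1}$ because $\gcd(5,2^n+1)=1$ for $n$ odd. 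Your expectation that "the map will not be a pure monomial (because of the additive shift by $a$)" is precisely what fails, and without this collapse your sufficiency argument has no concrete path to completion; no second AGW application or counting argument is needed. On the necessity side your plan is only a plan: the paper's argument hinges on the specific choice $t=3q-3$, a careful determination via Lucas' lemma of the three surviving binomial coefficients, and the resulting equation $y^8+y+1=0$ with $y=a^{-(q+1)/3}$, whose only solutions in $\mu_{3(q-1)}$ are shown to be the primitive cube roots of unity; none of these steps appear in your sketch, and they are where the actual work lies.

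A secondary error: for $n=3$ the exceptional list is \emph{not} $\mu_9\setminus\mu_3$ transported to $\mathbb{F}_{64}^*$. That set has only $6$ elements ($\gamma^7,\gamma^{14},\gamma^{28},\gamma^{35},\gamma^{49},\gamma^{56}$), whereas the theorem lists $15$, including elements such as $\gamma^3$ that do not lie in $\mu_9$ at all. So $n=3$ genuinely breaks the pattern of case (1) rather than merely specializing it, which is why the paper (and you, correctly) must treat it by exhaustive verification; but your proposed "reconciliation" with $\mu_9\setminus\mu_3$ would fail.
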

\begin{proof}
    When $q=2^3$, it is easy to verify the conditions. Hence we assume $n\geq 5$ from now on. If $f_1(x)$ is a permutation polynomial, then by Hermite's criterion, we know for any $1\leq t\leq q^2-2$,  the coefficient of $x^{q^2-1}$ in the reduction of $f_1(x)^t \mod (x^{q^2} -x)$ vanishes. Let $t=3q-3$. We consider 
    \begin{eqnarray}\label{26}
        \begin{aligned}
    f_1(x)^t
        &=(x^{6q-5}+ax)^{3q-3}\\
        &=x^{3q-3}\sum\limits_{i=0}^{3q-3}\binom{3q-3}{i}x^{(6q-6)i}a^{3q-3-i}\\
        &=\sum\limits_{i=0}^{3q-3}\binom{3q-3}{i}a^{3q-3-i}x^{3q-3+(6q-6)i}.
        \end{aligned}
    \end{eqnarray}

It is clear to see that the coefficient of $x^{q^2-1}$ is $\sum\limits_{i=0}^{3q-3}\binom{3q-3}{i}a^{3q-3-i}$ such that $1+2i\equiv 0\hspace{0.5em}(\text{mod}\hspace{0.5em} \frac{q+1}{3})$. Next we determine  all  possible values of $i$ such that $1+2i\equiv 0\hspace{0.5em}(\text{mod}\hspace{0.5em} \frac{q+1}{3})$, that is,  
    \begin{eqnarray}\label{27}
        1+2i=k\frac{q+1}{3},
    \end{eqnarray}
where $k\in \mathbb{Z}$. From Equation (\ref{27}), we can get

\begin{eqnarray}\label{28}
    i=k\frac{q+1}{6}-\frac{1}{2}.
\end{eqnarray}

Because $i$ is a nonnegative integer and  $i\leq 3q-3$,  $k$ must be odd and $k\leq 17$. 
%Then we know $k$ can take up to nine values: $1, \hspace{0.5em}3,\hspace{0.5em}5,\hspace{0.5em}7,\hspace{0.5em}9,\hspace{0.5em}11,\hspace{0.5em}13,\hspace{0.5em}15,\hspace{0.5em}17$. 
Plugging these 9 possible values of $k$ into Equation (\ref{28}), we obtain
{\small
\[
i \in  S:=
\left\{ 
\frac{q-2}{6}, \frac{q}{2}, \frac{5q+2}{6}, \frac{7q+4}{6}, \frac{9q+6}{6}, \frac{11q+8}{6}, \frac{13q+10}{6}, \frac{15q+12}{6}, \frac{17q+14}{6}
\right\}. 
\]
}

%Let $S=\{\frac{q-2}{6},\frac{q}{2},\frac{5q+2}{6},\frac{7q+4}{6},\frac{9q+6}{6},\frac{11q+8}{6},\frac{13q+10}{6},\frac{15q+12}{6},\frac{17q+14}{6}\}$, Equation (\ref{26}) is transformed into
Hence the coefficient of $x^{q^2-1}$ in Equation (\ref{26}) is 
\begin{eqnarray}\label{29}
    \sum\limits_{i\in S}\binom{3q-3}{i}a^{3q-3-i}=0.
\end{eqnarray}

We note that $3q-3=2(q-1)+(q-1)=2q+\frac{q}{2}+\frac{q}{4}+...+4+1$. Using  $q=2^n$,  by Lucas' lemma,  $\binom{3q-3}{i}\neq 0$ if and only if the $(n+1)$th bit and second lowest bit of the binary expansion of $i$ are $0$. We denote by $S_0\subseteq S$ the subset of $S$ such that the $(n+1)$th bit and second lowest bit of the binary expansion of $i$ are $0$. 

Next we explicitly determine $S_0$.  Obviously $\frac{q}{2} \in S_0$. On the other hand,  neither $\frac{9q+6}{6}=q+\frac{q}{2}+1$ nor $\frac{15q+12}{6}=2q+\frac{q}{2}+2$ belongs to $S_0$. Then we look at $2$-adic expansion of $\frac{q-2}{6}$, that is $\frac{q-2}{6}=\frac{q}{8}+\frac{q}{32}+...+4+1$, which implies that $\frac{q-2}{6} \in S_0$. Because $\frac{q+1}{3}=2 (\frac{q-2}{6})+1 =\frac{q}{4}+\frac{q}{16}+...+8+2+1$, we obtain $\frac{5q+2}{6} \not\in S_0$. This also implies that $\frac{11q+8}{6}=q+\frac{q}{2}+1+\frac{q+1}{3} \not\in S_0$.   Similarly, $\frac{7q+4}{6}\not \in S_0$ because $\frac{7q+4}{6}=q+\frac{q+4}{6}=q+1+\frac{q-2}{6}=q+\frac{q}{8}+\frac{q}{32}+...+4+2$.  Moroever,  $\frac{13q+10}{6}\not \in S_0$ because $\frac{13q+10}{6}=2q+2+\frac{q-2}{6}$. Finally 
$\frac{17q+14}{6}=2q+\frac{q}{2}+\frac{q+1}{3}+2$ implies that $\frac{17q+14}{6} \in S_0$. Hence, we know $S_0=\{\frac{q}{2},\frac{q-2}{6}, \frac{17q+14}{6}\}$.

Hence the coefficient of $x^{q^2-1}$ in Equation (\ref{26}) is 

\begin{eqnarray}\label{30}
\begin{aligned}
 \sum\limits_{i\in S}\binom{3q-3}{i}a^{3q-3-i} & = \sum\limits_{i\in S_0}\binom{3q-3}{i}a^{3q-3-i} \\
      &=a^{\frac{17q-16}{6}}+a^{\frac{5q}{2}-3}+a^{\frac{q-32}{6}}\\
      &=a^{\frac{q-32}{6}}(a^{\frac{8q+8}{3}}+a^{\frac{7q+7}{3}}+1)=0.
\end{aligned}
\end{eqnarray}

Therefore $a^{\frac{8q+8}{3}}+a^{\frac{7q+7}{3}}+1=0$. Let $y=a^{-\frac{q+1}{3}}$. 
Then $y\in \mu_{3(q-1)}$ and 
\begin{eqnarray}\label{32}
    y^8+y+1=0. 
\end{eqnarray}

It is easy to check that $\omega,\omega^2$ are solutions of $y^8+y+1=0$, where $\omega$ is a primitive $3$-th root of unity. In fact, there are no other solutions in $\mu_{3(q-1)}$. Because $y^8+y+1=0$ is an affine equation, all the solutions are of the form $\omega+b$, where $b\in \mathbb{F}_{2^3}$. If $3\nmid m$, then $\omega$, $\omega+1=\omega^2$ are the only two solutions in $\mathbb{F}_{2^{2m}}$. If $3\mid m$ and $\omega+b$ is a solution in $\mu_{3(q-1)}$, then we have $(\omega+b)^{3q}=(\omega+b)^3$. Obviously, 

\begin{equation}\label{33}
        (\omega+b)^3=1+\omega^2b+b^2\omega+b^3,
\end{equation}

\begin{eqnarray}\label{34}
    \begin{aligned}
        (\omega+b)^{3q}=(\omega^q+b)^3
                       =(\omega^2+b)^3
                       =1+b\omega+b^2\omega^2+b^3.
    \end{aligned}
\end{eqnarray}

From Equations (\ref{33}) and (\ref{34}), we have
\begin{eqnarray}\label{35}
    b^2\omega+b\omega^2=b\omega+b^2\omega^2.
\end{eqnarray}

From Equation (\ref{35}), we have $(b^2+b)\omega=(b^2+b)\omega^2$.  Hence $b^2+b=0$ and thus $b\in \{0,1\}$.  Therefore there are no other solutions in $\mu_{3(q-1)}$.  Hence $y = a^{-\frac{q+1}{3}} = \omega$ or $\omega^2$ and thus $a^{q+1} = 1$ but  $a^{\frac{q+1}{3}} \neq 1$. 

Conversely, by the AGW criterion, we have the following diagram:

\begin{center}
    \begin{tikzcd}
\mathbb{F}_{q^2}^{*} \arrow[d, "x^{q-1}"] \arrow[rr, "x(x^{6q-6}+a)"] &  & \mathbb{F}_{q^2}^{*} \arrow[d, "x^{q-1}"] \\
\mu_{q+1} \arrow[rr, "x(x^6+a)^{q-1}"]                                      &  & \mu_{q+1}                                    
\end{tikzcd}
\end{center}

From this diagram, we know $x(x^{6q-6}+a)$ is a permutation over $\mathbb{F}_{q^2}$ if and only if $x(x^6+a)^{q-1}$ permutes $\mu_{q+1}$. If $a\in \mu_{q+1}$ and $a\not \in \mu_{\frac{q+1}{3}}$, then $x^6+a$ has no root in $\mu_{q+1}$. In this case,  we have

\begin{eqnarray}\label{36}
    \begin{aligned}
        x(x^6+a)^{q-1}=\frac{x(x^{-6}+a^{-1})}{x^6+a}=a^{-1}x^{-5}.
    \end{aligned}
\end{eqnarray}

Because $n$ is odd, $\text{gcd}(5,2^n+1)=1$. Therefore $x(x^6+a)^{q-1}$ permutes $\mu_{q+1}$ and thus $x(x^{6q-6}+a)$ is a permutation binomial of $\mathbb{F}_{q^2}$.
\end{proof}

\subsection{Permutation binomials over $\mathbb{F}_{q^3}$}

Next we characterize another two new classes of permutation binomials over $\mathbb{F}_{q^3}$. These classes contain permutation binomials $x^{10}+ax$ and $x^{19}+ax$ over $\mathbb{F}_{2^6}$. 
%we generalize them into two infinite classes of permutation binomials.

\begin{thm}\label{T1}
	Let $q=2^n$, $n$ even, and $a\in \mathbb{F}_{q^3} ^{*}$. Then the polynomial 
	\[
	f_2(x)=x^{\frac{q^2+q}{2}}+ax
	\]
	is a permutation polynomial in $\mathbb{F}_{q^3}$ if and only if $a\in \mu_{q^2+q+1}$ and $a\not\in \mu_{\frac{q^2+q+1}{3}}$.
\end{thm}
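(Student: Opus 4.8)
The plan is to apply the multiplicative AGW criterion to move the problem onto the group $\mu_d$ of $d$-th roots of unity, where $d=\frac{q^2+q+1}{3}$ is the index of $f_2$, to simplify the transferred map to a clean rational expression, and then to settle necessity by Hermite's criterion and sufficiency by an explicit computation. First I would pin down the index: since $n$ is even, $q=2^n\equiv 1\pmod 3$, so $3\mid q-1$ and $6\mid q+2$; with $i=\frac{q^2+q}{2}$ one has $i-1=\frac{(q+2)(q-1)}{2}$, and from $q^2+q+1=(q+2)(q-1)+3$ one gets $\gcd\!\left(\frac{q+2}{2},q^2+q+1\right)=\gcd(q+2,3)=3$, hence $s:=\gcd(i-1,q^3-1)=3(q-1)$ and $d=\frac{q^3-1}{s}=\frac{q^2+q+1}{3}$. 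Writing $e=\frac{q+2}{6}$, we have $f_2(x)=x\bigl((x^{s})^{e}+a\bigr)$, so by the multiplicative form of the AGW criterion, $f_2$ permutes $\mathbb{F}_{q^3}$ if and only if $\gcd(1,s)=1$ (automatic) and $g(x):=x(x^{e}+a)^{3(q-1)}$ permutes $\mu_d$.

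Next I would simplify $g$ on $\mu_d$. One checks $2e(q-1)=d-1$, so $\gcd(e,d)=1$ and $e^{-1}\equiv-2(q-1)\pmod d$; substituting $y=x^{e}$ (a bijection of $\mu_d$) converts $g$ into
\[
\widetilde g_a(y)=y^{-2(q-1)}(y+a)^{3(q-1)}=\left(\frac{(y+a)^{3}}{y^{2}}\right)^{q-1}.
\]
Two remarks organise the rest: since $\mu_d\subseteq(\mathbb{F}_{q^3}^{*})^{3}$, the numerator and denominator of $(y+a)^3/y^2$ are both cubes, so $\widetilde g_a$ maps $\mu_d$ into $\mu_d$ as soon as $a\notin\mu_d$; and for $c\in\mu_d$ one has the scaling identity $\widetilde g_a(cy)=c^{\,q-1}\,\widetilde g_{a/c}(y)$, so whether $\widetilde g_a$ permutes $\mu_d$ depends only on the coset $a\mu_d\in\mathbb{F}_{q^3}^{*}/\mu_d$.

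For necessity, if $a\in\mu_d$ then $y+a$ vanishes somewhere on $\mu_d$ and $\widetilde g_a$ is not even a self-map of $\mu_d$, so $a\notin\mu_{(q^2+q+1)/3}$. To force $a\in\mu_{q^2+q+1}$ (equivalently $N_{\mathbb{F}_{q^3}/\mathbb{F}_q}(a)=1$) I would return to Hermite's criterion for $f_2$: for a suitably chosen exponent $t$, the coefficient of $x^{q^3-1}$ in $f_2(x)^{t}\bmod(x^{q^3}-x)$, evaluated through Lucas' lemma exactly in the style of the proof of Theorem~\ref{T3}, reduces after isolating a power of $a$ to a low-degree equation of affine type whose only roots compatible with $a\notin\mu_d$ satisfy $a^{q^2+q+1}=1$; alternatively one argues directly that $\widetilde g_a$ fails to be injective on $\mu_d$ once $a\notin\mu_{q^2+q+1}$. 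For sufficiency, assume $a\in\mu_{q^2+q+1}\setminus\mu_{(q^2+q+1)/3}$. Since $3\mid q-1$ a primitive cube root of unity $\omega$ lies in $\mathbb{F}_q$, and since $\mu_{q^2+q+1}=\mu_d\times\mu_3$ the scaling identity reduces the problem to $a=\omega$; it then remains to prove that $\widetilde g_\omega(y)=\bigl((y+\omega)^{3}/y^{2}\bigr)^{q-1}$ permutes $\mu_d$. I would do this by injectivity: $\widetilde g_\omega(y_1)=\widetilde g_\omega(y_2)$ forces $(y_1+\omega)^{3}y_2^{2}=c\,(y_2+\omega)^{3}y_1^{2}$ for some $c\in\mathbb{F}_q^{*}$, and expanding this, using $\omega^{q}=\omega$, $y_i^{q^2+q+1}=1$ (hence $y_i\, y_i^{q}\, y_i^{q^2}=1$), and the fact that each $y_i$ is a cube in $\mathbb{F}_{q^3}^{*}$, one deduces $c=1$ and $y_1=y_2$.

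The step I expect to be the main obstacle is the "$a\in\mu_{q^2+q+1}$'' half of necessity, together with the injectivity computation in the sufficiency direction. In the quadratic situation $\mu_{q+1}\subset\mathbb{F}_{q^2}$ used in Theorem~\ref{T3}, the relation $u^{q}=u^{-1}$ collapses all such expressions immediately; here, on the torus $\mu_{q^2+q+1}\subset\mathbb{F}_{q^3}$, one only has $u\,u^{q}\,u^{q^2}=1$, so both the Hermite coefficient extraction and the injectivity argument require a careful handling of the full Frobenius orbit $\{u,u^{q},u^{q^2}\}$, and in particular of the cube/noncube dichotomy in $\mathbb{F}_{q^3}^{*}$ that singles out $\mu_d$ inside $\mu_{q^2+q+1}$.
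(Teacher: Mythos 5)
Your reduction is set up correctly and is genuinely different from (and arguably cleaner than) the paper's sufficiency argument: the index computation $s=3(q-1)$, $d=\tfrac{q^2+q+1}{3}$, the identity $2e(q-1)=d-1$ giving $e^{-1}\equiv -2(q-1)\pmod d$, the resulting form $\widetilde g_a(y)=\bigl((y+a)^3/y^2\bigr)^{q-1}$ on $\mu_d$, the observation that $\mu_d\subseteq(\mathbb{F}_{q^3}^*)^3$, and the scaling identity reducing everything to the coset of $a$ modulo $\mu_d$ (hence to $a=\omega$) all check out. The paper instead proves sufficiency by writing $g_2(x)=f_2(x^2)$, taking the three Frobenius conjugates of $g_2(x)=b$, and eliminating via a resultant to show $g_2(x)=b$ has at most one solution; your torus-side approach would be a real alternative \emph{if completed}.

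It is not completed, however, and the two missing steps are exactly the substance of the theorem. (1) For necessity of $a\in\mu_{q^2+q+1}$ you defer to ``a suitably chosen exponent $t$'' and a Lucas-style computation ``in the style of Theorem~\ref{T3}'', without naming $t$ or doing the computation. This is where the paper does its real work: with $t=2q^2-q-1$ one must solve $(q+2)i+4q+2\equiv 0\pmod{2(q^2+q+1)}$, obtain five candidate exponents, and decide via the $2$-adic expansions of $\pm\tfrac13(q^2+q+1)$ and $\pm\tfrac23(q^2+q+1)$ which three binomial coefficients survive, before arriving at $u^2+u+1=0$ for $u=a^{d}$ (which, note, is quadratic rather than affine, unlike the degree-$8$ affine equation in Theorem~\ref{T3}). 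Your fallback of ``arguing directly that $\widetilde g_a$ fails to be injective once $a\notin\mu_{q^2+q+1}$'' would require handling all $3q-6$ remaining cosets of $\mu_d$ and is not substantiated at all. (2) For sufficiency you reduce injectivity of $\widetilde g_\omega$ to the statement that $(y_1+\omega)^3y_2^2=c\,(y_2+\omega)^3y_1^2$ with $c\in\mathbb{F}_q^*$ and $y_1,y_2\in\mu_d$ forces $c=1$ and $y_1=y_2$, and then simply assert that ``expanding'' and using $y\,y^q\,y^{q^2}=1$ yields the conclusion. No such expansion is given, and it is not evident how the relation $y^{3d}=1$ together with the cube condition pins down $c$; this is precisely the step the paper avoids by its resultant computation. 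As written, the proposal is a credible plan whose two load-bearing steps are placeholders, so it does not yet constitute a proof.
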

\begin{proof}

  If $f_2(x)$ is a permutation polynomial, then by Hermite's criterion, the coefficient of $x^{q^3-1}$ in the reduction of $f_2(x)^t \mod (x^{q^3} -x)$ vanishes for any $1\leq t\leq q^3-2$.
Let $t=2q^2-q-1$. We have 

\begin{eqnarray}\label{1}
    \begin{aligned}
f_2(x)^t&=(x^{\frac{q^2+q}{2}}+ax)^{2q^2-q-1}\\
      &=x^{2q^2-q-1}\sum\limits_{i=0}^{2q^2-q-1} \binom{2q^2-q-1}{i}x^{\frac{(q^2+q-2)i}{2}}a^{2q^2-q-1-i}\\
      &=\sum\limits_{i=0}^{2q^2-q-1}\binom{2q^2-q-1}{i}a^{2q^2-q-1-i}x^{2q^2-q-1+\frac{(q^2+q-2)i}{2}}.
\end{aligned}
\end{eqnarray}

The coefficient of $x^{q^3-1}$ in Equation~(\ref{1}) is $\sum\limits_{i=0}^{2q^2-q-1}\binom{2q^2-q-1}{i}a^{2q^2-q-1-i}$ such that $(2q+1)+\frac{q+2}{2}i\equiv 0\hspace{0.5em}(\text{mod} \hspace{0.5em}(q^2+q+1))$, which is equivalent to $4q+2+(q+2)i\equiv 0\hspace{0.5em}(\text{mod} \hspace{0.5em}2(q^2+q+1))$. 

%From this, we can determine the possible values $i$ can take.

%This congruence equation is equivalent to 
Let 
\begin{eqnarray}\label{2}
    (q+2)i+4q+2=2k(q^2+q+1),
\end{eqnarray}
where $k\in \mathbb{Z}$. Then

%From Equation (\ref{2}), we can get

\begin{eqnarray}\label{3}
    i=k(2q-2)-4+\frac{6k+6}{q+2}.
\end{eqnarray}

Because $i$ is an integer, we obtain

\begin{eqnarray}\label{4}
    6k+6\equiv 0\hspace{0.5em}(\text{mod} \hspace{0.5em}q+2).
\end{eqnarray}

Because $6\mid (q+2)$, Equation (\ref{4}) is equivalent to

\begin{eqnarray}\label{5}
    k+1\equiv 0\hspace{0.5em}(\text{mod} \hspace{0.5em}\frac{q+2}{6}).
\end{eqnarray}

Because $i\leq 2q^2-q-1$, we obtain $k\leq q$. Together with Equation (\ref{5}), we know that $k$ can take up to five values: $\frac{q+2}{6}-1$, $\frac{q+2}{3}-1$, $\frac{q+2}{2}-1$, $\frac{2(q+2)}{3}-1$, $\frac{5(q+2)}{6}-1$. Plugging these values into Equation (\ref{3}), we obtain  $i \in S=\{\frac{q^2-5q-5}{3},\frac{2q^2-4q-4}{3}, q^2-q-1, \frac{4q^2-2q-2}{3}, \frac{5q^2-q-1}{3}\}$.

Hence the coefficient of $x^{q^3-1}$ is
%Equation (\ref{1}) is transformed into

\begin{eqnarray}\label{6}
   \sum\limits_{i\in S}\binom{2q^2-q-1}{i}a^{2q^2-q-1-i} =\sum\limits_{j\in S'}\binom{2q^2-q-1}{j}a^{j}, 
\end{eqnarray}
where $S'=\{q^2+\frac{2}{3}(q^2+q+1), q^2+\frac{1}{3}(q^2+q+1), q^2, q^2-\frac{1}{3}(q^2+q+1), q^2-\frac{2}{3}(q^2+q+1)\}$.

We note that $2q^2-q-1=q^2+\frac{q^2}{2}+\cdots+2q+\frac{q}{2}+\cdots+1$. From Lucas' lemma, $\binom{2q^2-q-1}{j}\neq 0$ if and only if the $(n+1)$ th bit of the binary expansion of $j$ is $0$. We denote by $S_0 \subseteq S'$ such that the $(n+1)$ th bit of the binary expansion of $j \in S'$ is $0$. Clearly $j=q^2 \in S_0$. 

When $q>4$, we first look at the $2$-adic expansion of $\frac{q^2+q+1}{3}=\frac{1}{3}(q^2+q-2)+1=\frac{1}{3}(q-1)(q+2)+1$.  Because $q=2^n$ and $n$ is even, we can easily determine $\frac{q-1}{3}$'s $2$-adic expansion, that is $\frac{q-1}{3}=\frac{q}{4}+\frac{q}{16}+\frac{q}{64}+\cdots+1$. Using $\frac{q^2+q+1}{3}=\frac{1}{3}(q-1)(q+2)+1$, we obtain $\frac{1}{3}(q^2+q+1)= \frac{q^2}{4}+\frac{q^2}{16}+\frac{q^2}{64}+\cdots+q+\frac{q}{2}+\frac{q}{8}+\cdots+2+1$.

 Therefore the $(n+1)$ th bit of $q^2+\frac{1}{3}(q^2+q+1)$ and $q^2+\frac{2}{3}(q^2+q+1)$ is not $0$ respectively. Hence both $q^2+\frac{2}{3}(q^2+q+1)$ and $q^2+\frac{1}{3}(q^2+q+1)$  do not belong to  $S_0$. 

Similarly, we obtain $q^2-\frac{1}{3}(q^2+q+1)=\frac{q^2}{2}+\frac{q^2}{8}+\frac{q^2}{32}+\cdots+2q+\frac{q}{4}+\frac{q}{16}+\cdots+1$ and $q^2-\frac{2}{3}(q^2+q+1)=\frac{q^2}{4}+\frac{q^2}{16}+\frac{q^2}{64}+\cdots+\frac{q}{2}+\frac{q}{8}+\cdots+2$. Hence $j=q^2-\frac{1}{3}(q^2+q+1), q^2-\frac{2}{3}(q^2+q+1) \in S_0$. It is easy to verify when $q=4$, $q^2-\frac{1}{3}(q^2+q+1)$, $q^2-\frac{2}{3}(q^2+q+1)\in S_0$. Hence we know $S_0=\{q^2, q^2-\frac{1}{3}(q^2+q+1), q^2-\frac{2}{3}(q^2+q+1)\}$.

Therefore the coefficient of $x^{q^3-1}$ is

\begin{eqnarray}\label{8}
\begin{aligned}
   \sum\limits_{j\in S'}\binom{2q^2-q-1}{j}a^{j} &=a^{q^2}+a^{q^2-\frac{1}{3}(q^2+q+1)}+a^{q^2-\frac{2}{3}(q^2+q+1)}\\
    &=a^{q^2-\frac{2}{3}(q^2+q+1)}(a^{\frac{2}{3}(q^2+q+1)}+a^{\frac{1}{3}(q^2+q+1)}+1).\\
\end{aligned}
\end{eqnarray}

If $a\in \mu_{\frac{q^2+q+1}{3}}$, then the coefficient of $x^{q^3-1}$ is 
$a^{q^2-\frac{2}{3}(q^2+q+1)} \neq 0$, contradicts the fact that $f_2(x)$ is a permutation binomial. 
Hence $a\not\in \mu_{\frac{q^2+q+1}{3}}$.  In this case, we must have 
$a^{q^2+q+1}=1$.

\iffalse%%%%
\begin{eqnarray}\label{9}
 \sum\limits_{x\in \mathbb{F}_{q^3}}(x^{\frac{q^2+q}{2}}+ax)^{2q^2-q-1}\neq 0
\end{eqnarray}
\fi%%%%%%

%which is a contradiction because $f_2(x)$ is a permutation. 

\iffalse%%%%
, Equation (\ref{8}) is transformed into

\begin{eqnarray}\label{10}
    \sum\limits_{x\in \mathbb{F}_{q^3}}(x^{\frac{q^2+q}{2}}+ax)^{2q^2-q-1}=a^{q^2-\frac{2}{3}(q^2+q+1)}\frac{a^{q^2+q+1}+1}{a^{\frac{q^2+q+1}{3}}+1}   
\end{eqnarray}

Because $f_2(x)$ is a permutation, from Equation (\ref{10}), we can get $a^{q^2+q+1}=1$.
\fi%%%%%

Conversely, it suffices to prove $f_2(x)=b$ has at most one solution for all $b\in \mathbb{F}_{q^3}$ when $a\in \mu_{q^2+q+1}$ and $a\not\in \mu_{\frac{q^2+q+1}{3}}$. Let $g_2(x)=f_2(x^2)$.  Obviously $f_2(x)$ is a permutation polynomial of $\mathbb{F}_{q^3}$ if and only if $g_2(x)$ is a permutation polynomial  of $\mathbb{F}_{q^3}$. Therefore we will show that $g_2(x) =b$ has at most one solution for any $b \in \mathbb{F}_{q^3}$.  If $b=0$, then $a\not \in \mu_{\frac{q^2+q+1}{3}}$ implies that $x^{q^2+q} + a x^2=0$ has only zero solution. 

From now on, we assume $b\neq 0$. Let $y=x^q$, $z=y^q$, $b_1=b$, $b_2=b^q$, $b_3=b_2^q$.  Raising $q$-th power to $g_2(x)=b$,  we obtain a system of equations

\begin{eqnarray}\label{11}
	\left\{\begin{array}{c}
		yz+ax^2=b_1,\\
        zx+a^qy^2=b_2,\\
        xy+a^{q^2}z^2=b_3.
	\end{array}
	\right.
\end{eqnarray}

Since $b_1=b\neq 0$, we must have $x\neq 0$. From the second equation of (\ref{11}), we write

\begin{eqnarray}\label{12}
    z=\frac{b_2+a^{q}y^2}{x}.
\end{eqnarray}

Plugging Equation (\ref{12}) into the first and third equation of (\ref{11}), we get

\begin{eqnarray}\label{13}
	\left\{\begin{array}{c}
       h_1(x,y)=ax^3+b_1x+a^qy^3+b_2y=0,\\
       h_2(x,y)=x^3y+b_3x^2+a^{q^2+2q}y^4+a^{q^2}b_2^2=0.
\end{array}
	\right.
\end{eqnarray}

The resultant of Equation (\ref{13}) with respect to the indeterminate $y$ is

\begin{eqnarray}\label{14}
    (a^{3q-2}b_1^3+a^{3q-1}b_1b_2b_3+a^{2q-1}b_2^3+a^{4q}b_3^3)x^6+(a^{3q-3}b_1^4+a^{3q-1}b_2^2b_3^2)x^4=0.
\end{eqnarray}

Let $A=a^{3q-2}b_1^3+a^{3q-1}b_1b_2b_3+a^{2q-1}b_2^3+a^{4q}b_3^3$ and $B=a^{3q-3}b_1^4+a^{3q-1}b_2^2b_3^2$. If exactly one of $A$ and $B$ is zero,  then $x$ is zero, a contradiction. 
%It is clear that when $A, B\neq 0$, Equation (\ref{14}) has at most one non zero solution. 
Next we show both $A$ and $B$ are not zero. %If $A=0$, then $B=0$. 
Assume, to the contrary, that $A=B=0$. We can get a system of equations

\begin{eqnarray}\label{15}
\left\{\begin{array}{c}
     a^{3q-2}b_1^3+a^{3q-1}b_1b_2b_3+a^{2q-1}b_2^3+a^{4q}b_3^3=0,\\
   a^{3q-3}b_1^4+a^{3q-1}b_2^2b_3^2=0.
\end{array}
	\right.
\end{eqnarray}

From the second equation of (\ref{15}), we get $b_1^2=ab_2b_3$, which is equivalent to saying $a^{-1}=b_1^{q^2+q-2}$. But $\text{gcd}(q^2+q-2,q^3-1)=3(q-1)$ implies $a\in \mu_{\frac{q^2+q+1}{3}}$, a contradiction. Hence $AB\neq 0$ and thus Equation (\ref{14}) has at most one non zero solution for each $b_1$. Hence $g_2(x) =b $ has at most one
solution for any $b\in \mathbb{F}_{q^3}$.  
\end{proof}

\begin{rmk}
    In \cite{SNROPPOFF2017}, the authors constructed a class of complete permutation monomials: $x^{2^{4k-1}+2^{2k-1}}$ over $\mathbb{F}_{2^{6k}}$, where $k$ is a positive integer with \text{gcd}$(k,3)=1$. Our theorem generalizes their result.
\end{rmk}

Because $(q^2+q-1)\frac{q^2+q}{2}\equiv 1\hspace{0.5em}(\text{mod}\hspace{0.5em}(q^3-1))$, we have the following  permutation polynomial  $h_2(x) = a^{-1} g_2(x^{q^2+q-1})$. 

\begin{cor}\label{cor1}
    Let $q=2^n$, $n$ even, and $a\in \mathbb{F}_{q^3} ^{*}$. Then the polynomial 
	\[
	h_2(x)=x^{q^2+q-1}+ax
	\]
	is a permutation polynomial in $\mathbb{F}_{q^3}$ if and only if $a\in \mu_{q^2+q+1}$ and $a\not\in \mu_{\frac{q^2+q+1}{3}}$.
\end{cor}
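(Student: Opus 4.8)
The plan is to deduce Corollary~\ref{cor1} directly from Theorem~\ref{T1} by a change of variable, so no new finite-field computation is needed. The key observation, already recorded just before the statement, is the congruence $(q^2+q-1)\cdot\frac{q^2+q}{2}\equiv 1\pmod{q^3-1}$. Since $\mathbb{F}_{q^3}^*$ is cyclic of order $q^3-1$, this congruence says that the two exponents $\frac{q^2+q}{2}$ and $q^2+q-1$ are mutual inverses in $\mathbb{Z}/(q^3-1)\mathbb{Z}$, hence the monomials $x^{\frac{q^2+q}{2}}$ and $x^{q^2+q-1}$ are compositional inverses of each other as permutations of $\mathbb{F}_{q^3}$ (noting $\gcd(\frac{q^2+q}{2},q^3-1)=1$ is forced by the congruence, and both fix $0$).

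First I would set $g_2(x)=f_2(x^2)=x^{q^2+q}+ax^2$ as in the proof of Theorem~\ref{T1}, where it is shown that $g_2$ permutes $\mathbb{F}_{q^3}$ exactly when $f_2$ does, i.e. iff $a\in\mu_{q^2+q+1}$ and $a\notin\mu_{\frac{q^2+q+1}{3}}$. Then I would observe that the map $x\mapsto x^{q^2+q-1}$ is a bijection of $\mathbb{F}_{q^3}$ (again because $\gcd(q^2+q-1,q^3-1)=3(q-1)$... wait — here I must be careful: actually the exponent on the outside composes with $g_2$, not with a bare monomial). More precisely, I would compute $h_2(x)=a^{-1}g_2(x^{q^2+q-1})=a^{-1}\big((x^{q^2+q-1})^{q^2+q}+a(x^{q^2+q-1})^2\big)=a^{-1}x^{(q^2+q-1)(q^2+q)}+x^{2(q^2+q-1)}$, and reduce the exponents modulo $q^3-1$: using $(q^2+q-1)\frac{q^2+q}{2}\equiv1$ one gets $(q^2+q-1)(q^2+q)\equiv 2\pmod{q^3-1}$, and $2(q^2+q-1)=2q^2+2q-2\equiv q^2+q-1+\ (q^2+q-1)$; reducing $q^2+q-1$ twice against $q^3-1$ one checks $2(q^2+q-1)\equiv q^2+q-1\cdot(\ldots)$ — the cleaner route is to note $2(q^2+q-1)\equiv$ the exponent making the second term $a x^{q^2+q-1}$ after the $a^{-1}$ prefactor absorbs one power of $a$ from... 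I would instead verify directly that $h_2(x)=x^{q^2+q-1}+ax$ by checking $2(q^2+q-1)\equiv q^2+q-1\pmod{q^3-1}$ is false, so the right bookkeeping is: $a^{-1}\cdot a\,x^{2(q^2+q-1)}$ should be $a x^{q^2+q-1}$, forcing $2(q^2+q-1)\equiv q^2+q-1$, i.e. $q^2+q-1\equiv 0$, which is wrong; hence the correct normalization is $h_2(x)=a^{-q?}\cdots$. This is the one place I must get the exponent arithmetic exactly right.

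The honest statement of the plan: compositional-inverse bijections preserve the permutation property, so $h_2$ permutes $\mathbb{F}_{q^3}$ iff $g_2$ does iff (by Theorem~\ref{T1}) $a\in\mu_{q^2+q+1}\setminus\mu_{\frac{q^2+q+1}{3}}$ — provided $h_2$ really equals $x^{q^2+q-1}+ax$, which I would confirm by the substitution $h_2(x)=a^{-1}g_2(x^{q^2+q-1})$ and a careful reduction of both exponents $2(q^2+q-1)$ and $(q^2+q-1)(q^2+q)$ modulo $q^3-1$, together with tracking the power of $a$ picked up (the $a^{-1}$ prefactor is chosen precisely to restore the binomial to monic-plus-$ax$ form). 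One should also note $\gcd(q^2+q-1,q^3-1)$ equals $\gcd(q^2+q-1,q^3-1)$; since $q^3-1=(q-1)(q^2+q+1)$ and $q^2+q-1\equiv -2\pmod{q-1}$ and $q^2+q-1\equiv -2\pmod{q^2+q+1}$... actually $\gcd$ computations here are only needed to confirm $x\mapsto x^{q^2+q-1}$ is a bijection, which is automatic from the inverse relation above; I would just invoke that.

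The main obstacle is purely clerical: pinning down the exponent reductions modulo $q^3-1$ and the compensating power of $a$ so that $a^{-1}g_2(x^{q^2+q-1})$ visibly becomes $x^{q^2+q-1}+ax$ rather than some twisted binomial $c\,x^j+a\,x^k$. Once that identity is verified, the corollary is immediate, since $g_2$ and $h_2$ are related by pre-composition with the bijection $x\mapsto x^{q^2+q-1}$ (and scaling by a nonzero constant), operations that manifestly preserve being a permutation of $\mathbb{F}_{q^3}$, and Theorem~\ref{T1} supplies the exact condition on $a$.
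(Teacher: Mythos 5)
Your overall strategy is exactly the paper's: precompose $f_2$ (equivalently $g_2$) with the inverse power map $x\mapsto x^{q^2+q-1}$ and rescale. But the write-up never actually closes, and the point where you stall is a genuine gap rather than bookkeeping you can defer. Two things need fixing. First, the identity you set out to verify, $h_2(x)=a^{-1}g_2(x^{q^2+q-1})$, is not literally true: reducing modulo $x^{q^3}-x$, the congruence $(q^2+q-1)\cdot\frac{q^2+q}{2}\equiv 1\pmod{q^3-1}$ gives $a^{-1}x^{(q^2+q-1)(q^2+q)}\equiv a^{-1}x^2$, while the other exponent $2(q^2+q-1)=2q^2+2q-2$ is already reduced (it is less than $q^3-1$ for $q\ge 4$), so
$a^{-1}g_2(x^{q^2+q-1})\equiv a^{-1}x^{2}+x^{2(q^2+q-1)}=(x^{q^2+q-1}+cx)^{2}$,
where $c^{2}=a^{-1}$ --- the square of a binomial of the right shape, but with coefficient a square root of $a^{-1}$, not $a$. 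The clean repair is to bypass $g_2$ and use $f_2$ directly: $f_2(x^{q^2+q-1})\equiv x+ax^{q^2+q-1}\pmod{x^{q^3}-x}$, hence $a^{-1}f_2(x^{q^2+q-1})\equiv x^{q^2+q-1}+a^{-1}x$.

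Second --- and this is the step your proposal never mentions --- even after the exponent arithmetic is sorted out, the transfer replaces the coefficient $a$ by $a^{-1}$ (or by a square root of $a^{-1}$ in the $g_2$ version). What Theorem \ref{T1} then yields is: $x^{q^2+q-1}+a^{-1}x$ permutes $\mathbb{F}_{q^3}$ if and only if $a\in\mu_{q^2+q+1}$ and $a\notin\mu_{\frac{q^2+q+1}{3}}$. To reach the corollary as stated you must add the (easy but necessary) observation that $\mu_{q^2+q+1}$ and $\mu_{\frac{q^2+q+1}{3}}$ are subgroups of odd order, hence closed under $b\mapsto b^{-1}$ and with $b\mapsto b^{2}$ bijective on each, so the membership condition is invariant under replacing $a$ by $a^{-1}$ (or by $\sqrt{a^{-1}}$). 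With these two points supplied, your argument --- that the permutation property is preserved under precomposition with the bijection $x\mapsto x^{q^2+q-1}$ and under scaling by a nonzero constant --- does prove the corollary, by essentially the same route the paper takes.
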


\begin{rmk}
   Theorem \ref{T1} and Corollary \ref{cor1} give us two classes of permutation binomials which have relatively large  index $\frac{q^2+q+1}{3}$.
\end{rmk}

\subsection{Permutation binomials over $\mathbb{F}_{q^4}$}

To explain  the permutation behavior  of $x^{154}+ax$ in $\mathbb{F}_{2^8}$, we generalize it into an infinite class of permutation binomials over  $\mathbb{F}_{q^4}$.

\begin{thm}\label{T2}
 Let $q=2^n$, $n>1$  and $a\in \mathbb{F}_{q^4}^{*}$. The polynomial 
 \[
f_3(x)=x^{\frac{q^3-q^2+q-1}{2}+1}+ax
 \]
 is a permutation polynomial in $\mathbb{F}_{q^4}$ if and only if $a\in \mu_{q^2-1}$ and $a\not\in \mu_{q+1}$.
\end{thm}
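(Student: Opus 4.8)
The plan is to reduce to an index-$(q+1)$ rational map on $\mu_{q+1}$ via the AGW criterion, handle the two directions separately, and analyse a degree-two rational function in the hard case. First I would observe that the exponent $\tfrac{q^3-q^2+q-1}{2}+1$ is meant modulo $q^4-1$ (which is odd, so $2$ is a unit), and that substituting $x\mapsto x^2$ — a bijection of $\mathbb{F}_{q^4}$ — turns $f_3$ into $F(x):=f_3(x^2)=x^{q^3-q^2+q+1}+ax^2=x^2\bigl(x^{s}+a\bigr)$, where $s:=(q-1)(q^2+1)=q^3-q^2+q-1$ is odd and $q^4-1=(q+1)s$. Hence $f_3$ is a PP of $\mathbb{F}_{q^4}$ if and only if $F$ is, and by the multiplicative AGW criterion (the Corollary in Section~2 applied with $r=2$, which satisfies $\gcd(2,s)=1$) this holds if and only if $h(x):=x^2(x+a)^{s}$ permutes $\mu_{q+1}$. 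For $x\in\mu_{q+1}$ one has $h(x)^{q+1}=x^{2(q+1)}(x+a)^{s(q+1)}=(x+a)^{q^4-1}$, so if $a\in\mu_{q+1}$ then $h(a)=0\notin\mu_{q+1}$ and $f_3$ is not a PP, whereas if $a\notin\mu_{q+1}$ then $h(\mu_{q+1})\subseteq\mu_{q+1}$ and everything reduces to injectivity of $h$ on $\mu_{q+1}$.

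Next I would split according to whether $a\in\mathbb{F}_{q^2}$, i.e. $a\in\mu_{q^2-1}$. For sufficiency, suppose $a\in\mathbb{F}_{q^2}^{*}$ and $a\notin\mu_{q+1}$; then $x+a\in\mathbb{F}_{q^2}^{*}$ for all $x\in\mu_{q+1}$, and using $u^{q^2+1}=u^2$ on $\mathbb{F}_{q^2}^{*}$ together with $x^q=x^{-1}$ one gets $(x+a)^{s}=\bigl((x+a)^{q-1}\bigr)^{2}=\bigl(\tfrac{x^{-1}+a^q}{x+a}\bigr)^{2}$, hence $h(x)=\bigl(\tfrac{a^{q}x+1}{x+a}\bigr)^{2}$. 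Since $\gcd(2,q+1)=1$, squaring permutes $\mu_{q+1}$, so $h$ permutes $\mu_{q+1}$ iff the linear fractional map $\phi(x)=\tfrac{a^{q}x+1}{x+a}$ does; its determinant $a^{q+1}+1$ is nonzero because $a\notin\mu_{q+1}$, so $\phi$ permutes $\mathbb{P}^1(\mathbb{F}_{q^2})$, and a short computation with $x^q=x^{-1}$, $a^{q^2}=a$ gives $\phi(x)^q=\phi(x)^{-1}$, so $\phi$ maps $\mu_{q+1}$ into itself, and being injective it permutes $\mu_{q+1}$. Thus $f_3$ is a PP.

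For the converse it remains to show that $a\notin\mathbb{F}_{q^2}$ forces $h$ to fail to permute $\mu_{q+1}$; I expect this to be the main obstacle. Set $T:=a+a^{q^2}\in\mathbb{F}_{q^2}$ and $N:=a\,a^{q^2}\in\mathbb{F}_{q^2}^{*}$, so that $z^2+Tz+N$ is the (irreducible, hence $T\neq0$) minimal polynomial of $a$ over $\mathbb{F}_{q^2}$. For $x\in\mu_{q+1}$, $(x+a)^{s}=\mathrm{N}_{\mathbb{F}_{q^4}/\mathbb{F}_{q^2}}(x+a)^{q-1}$ with $\mathrm{N}_{\mathbb{F}_{q^4}/\mathbb{F}_{q^2}}(x+a)=(x+a)(x+a^{q^2})=x^2+Tx+N$, and applying $x^q=x^{-1}$ one obtains $h(x)=R(x):=\dfrac{N^{q}x^2+T^{q}x+1}{x^2+Tx+N}$. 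I would then check that $R$ is genuinely of degree two: its poles are $a,a^{q^2}$ and its zeros are $a^{-q},a^{-q^3}$, and any coincidence among these four elements of $\mathbb{F}_{q^4}\setminus\mathbb{F}_{q^2}$ is easily seen to force $a^{q+1}=1$, contradicting $a\notin\mu_{q+1}$; moreover $R$ is separable, since $R'\equiv0$ would force $x^2+Tx+N$ to be a scalar multiple of $N^{q}x^2+T^{q}x+1$, impossible for coprime polynomials of equal degree with $T\neq0$. Now, choosing a Möbius map $\psi\in\operatorname{PGL}_2(\mathbb{F}_{q^2})$ with $\psi\bigl(\mathbb{P}^1(\mathbb{F}_q)\bigr)=\mu_{q+1}$ (both are Baer sublines of $\mathbb{P}^1(\mathbb{F}_{q^2})$), the conjugate $\widetilde R:=\psi^{-1}\circ R\circ\psi$ maps $\mathbb{P}^1(\mathbb{F}_q)$ into itself; since a rational function of degree $\le2$ over $\mathbb{F}_{q^2}$ taking $\mathbb{F}_q$-values at the $q+1\ge5$ points of $\mathbb{P}^1(\mathbb{F}_q)$ must already be defined over $\mathbb{F}_q$, the classification of degree-two rational permutations of $\mathbb{P}^1(\mathbb{F}_q)$ — in characteristic $2$ these are exactly the maps linearly equivalent to $x\mapsto x^2$, hence the inseparable ones — shows that the separable degree-two map $\widetilde R$, and therefore $R$ and $h$, cannot permute $\mu_{q+1}$. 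Hence $f_3$ is not a PP, which completes the characterisation.

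A self-contained variant of this last step, avoiding the descent to $\mathbb{F}_q$, is to compute the deck involution $\iota\in\operatorname{PGL}_2(\mathbb{F}_{q^2})$ of $R$ (so $R\circ\iota=R$) explicitly in terms of $T$ and $N$, verify directly — using $T^{q^2}=T$, $N^{q^2}=N$ and $x^q=x^{-1}$ — that $\iota$ preserves $\mu_{q+1}$ while its unique fixed point lies in $\mu_{q+1}$, and conclude that $R$ is two-to-one on $\mu_{q+1}$ away from that point. Either way, the crux is controlling the non-degeneracy of the rational function obtained from $h$ when $a\notin\mathbb{F}_{q^2}$: that numerator and denominator do not cancel, and that $R$ is genuinely not linearly equivalent to $x\mapsto x^2$.
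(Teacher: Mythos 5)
Your argument is correct, and although it shares the paper's skeleton --- substituting $x\mapsto x^2$, applying the multiplicative AGW criterion to reduce to $h(x)=x^2(x+a)^{(q-1)(q^2+1)}$ on $\mu_{q+1}$, and ultimately invoking the characteristic-$2$ classification of degree-two permutation rational functions of $\mathbb{P}^1(\mathbb{F}_q)$ --- your execution of both directions is genuinely different and in places cleaner. For sufficiency the paper conjugates $h$ to a rational function $\frac{Ax^2+Bx+C}{Dx^2+Ex+F}$ on $\mathbb{P}^1(\mathbb{F}_q)$ via $\phi(x)=\frac{x+z}{x+z^q}$, observes that for $a\in\mu_{q^2-1}$ it collapses to a degree-one map composed with $x^2$, and proves invertibility of the degree-one part by showing that $GN-HM$, viewed as a polynomial of degree $3q$ in the auxiliary parameter $z$, cannot vanish identically; that argument needs $q>4$ and the case $q=4$ is delegated to a computer. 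Your direct factorization $h(x)=\bigl(\tfrac{a^qx+1}{x+a}\bigr)^2$ on $\mu_{q+1}$, using $(x+a)^{q^2+1}=(x+a)^2$ when $a\in\mathbb{F}_{q^2}$ together with $\gcd(2,q+1)=1$ and the determinant $a^{q+1}+1\neq0$, dispenses with the auxiliary parameter, the resultant-type computation, and the computer check, and is uniform in $q$. For necessity the paper computes the conjugated coefficients explicitly and extracts $B=E=0$ from the classification to force $a=a^{q^2}$; you instead write $h|_{\mu_{q+1}}$ as $R(x)=\frac{N^qx^2+T^qx+1}{x^2+Tx+N}$ with $T,N$ the trace and norm of $a$ over $\mathbb{F}_{q^2}$, observe that $a\notin\mathbb{F}_{q^2}$ is exactly $T\neq0$, which (after your coprimality check, where every degeneration correctly reduces to $a^{q+1}=1$) makes $R$ a genuinely separable degree-two map, and conclude via descent to $\mathbb{F}_q$. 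This isolates the trace condition as the conceptual reason for failure rather than reading it off from coordinates. In a written version you should spell out the two steps you currently assert: the existence of $\psi\in\mathrm{PGL}_2(\mathbb{F}_{q^2})$ carrying $\mathbb{P}^1(\mathbb{F}_q)$ onto $\mu_{q+1}$ (the paper's own $\phi(x)=\frac{x+z}{x+z^q}$ serves), and the descent claim that a degree-$\le 2$ rational function over $\mathbb{F}_{q^2}$ taking values in $\mathbb{P}^1(\mathbb{F}_q)$ at all $q+1\ge 5$ points of $\mathbb{P}^1(\mathbb{F}_q)$ equals its Frobenius conjugate and hence is defined over $\mathbb{F}_q$; both are routine, so there is no gap.
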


\begin{proof}
    Let $g_3(x)=f_3(x^2)$.  Obviously $f_3(x)$ is a permutation polynomial if and only if $g_3(x)$ is a permutation polynomial. Next we show $g_3(x)$ is a permutation polynomial if and only if $a\in \mu_{q^2-1}$ and $a\not\in \mu_{q+1}$.

    By the AGW criterion, we have the following diagram:
 \begin{center}
     \begin{tikzcd}
\mathbb{F}_{q^4}^{*} \arrow[rrrr, "g_3(x)= x^2(x^{(q-1)(q^2+1)}+a)"] \arrow[dd, "x^{(q-1)(q^2+1)}"] &  &  &  & \mathbb{F}_{q^4}^{*} \arrow[dd, "x^{(q-1)(q^2+1)}"] \\
                                                                                        &  &  &  &                                                 \\
\mu_{q+1} \arrow[rrrr, "h_3(x) = x^2(x+a)^{(q-1)(q^2+1)}"]                                       &  &  &  & \mu_{q+1}                                      
\end{tikzcd}
 \end{center}

Because $\text{gcd}(2,(q-1)(q^2+1))=1$, $g_3(x)$ is a permutation polynomial if and only if $h_3(x)=x^2(x+a)^{(q-1)(q^2+1)}$ permutes $\mu_{q+1}$.  Note $\text{gcd}((q-1)(q^2+1),q^4-1)=(q-1)(q^2+1)$. If  $a\in \mu_{q+1}$, then $x^{(q-1)(q^2+1)}+a$ has nonzero roots and thus $g_3(x)$ can not be a permutation polynomial.

Next we study when $h_3(x)=x^2(x+a)^{(q-1)(q^2+1)}$ permutes $\mu_{q+1}$.   
Consider: $\phi(x): \mathbb{F}_{q}\cup \{\infty\}\rightarrow\mu_{q+1}$ such that $\phi(x)=\frac{x+z}{x+z^q}$, where $z\in \mathbb{F}_{q^2}\setminus \mathbb{F}_{q}$, and $\phi^{-1}(x): \mu_{q+1}\rightarrow\mathbb{F}_q\cup \{\infty\}$ such that $\phi^{-1}(x)=\frac{z^qx+z}{x+1}$. It is easy to check $\phi^{-1}(\phi(x))=x$. Then $h_3(x)$ permutes $\mu_{q+1}$ if and only if $l_3(x)=\phi^{-1}(h_3(\phi(x))$ permutes $\mathbb{F}_q\cup \{\infty\}$. Here is the diagram illustrating this relation.
\begin{center}
    \begin{tikzcd}
\mu_{q+1} \arrow[rr, "h_3(x)"]                                         &  & \mu_{q+1} \arrow[d, "\phi^{-1}(x)"] \\
\mathbb{F}_q\cup \{\infty\} \arrow[u, "\phi(x)"'] \arrow[rr, "l_3(x)"] &  & \mathbb{F}_q\cup \{\infty\}        
\end{tikzcd}
\end{center}

Now let us compute $l_3(x)$. First, we compute $h_3(\phi(x))$, that is

\begin{eqnarray}\label{16}
\begin{aligned}
      h_3(\phi(x)) &= \bigg(\frac{x+z}{x+z^q}\bigg)^2 \frac{\left(\bigg(\frac{x+z}{x+z^q}\bigg)^2 +a\right)^{q^3+q}}{\left(\bigg(\frac{x+z}{x+z^q}\bigg)^2 +a\right)^{q^2+1}} \\
      &=\frac{a^{q^3+q}\bigg(\frac{x+z}{x+z^q}\bigg)^2+(a^q+a^{q^3})\bigg(\frac{x+z}{x+z^q}\bigg)+1}{\bigg(\frac{x+z}{x+z^q}\bigg)^2+(a+a^{q^2})\bigg(\frac{x+z}{x+z^q}\bigg)+a^{q^2+1}}\\
      &=\frac{a^{q^3+q}(x+z)^2+(a^{q}+a^{q^3})(x+z)(x+z^q)+(x+z^q)^2}{(x+z)^2+(a+a^{q^2})(x+z)(x+z^q)+a^{q^2+1}(x+z^q)^2}\\
      &=\frac{(a^{q}+1)(a^{q^3}+1)x^2+(a^q+a^{q^3})(z+z^q)x+a^{q^3+q}z^2+(a^{q}+a^{q^3})z^{q+1}+z^{2q}}{(a^{q^2}+1)(a+1)x^2+(a+a^{q^2})(z+z^q)x+z^2+(a+a^{q^2})z^{q+1}+a^{q^2+1}z^{2q}}.
\end{aligned}
\end{eqnarray}

This gives 

\begin{eqnarray}\label{17}
\begin{aligned}
     l_3(x)&=\phi^{-1}(h_3(\phi(x)))\\
           &=\frac{z^q\frac{(a^{q}+1)(a^{q^3}+1)x^2+(a^q+a^{q^3})(z+z^q)x+a^{q^3+q}z^2+(a^{q}+a^{q^3})z^{q+1}+z^{2q}}{(a^{q^2}+1)(a+1)x^2+(a+a^{q^2})(z+z^q)x+z^2+(a+a^{q^2})z^{q+1}+a^{q^2+1}z^{2q}}+z}{\frac{(a^{q}+1)(a^{q^3}+1)x^2+(a^q+a^{q^3})(z+z^q)x+a^{q^3+q}z^2+(a^{q}+a^{q^3})z^{q+1}+z^{2q}}{(a^{q^2}+1)(a+1)x^2+(a+a^{q^2})(z+z^q)x+z^2+(a+a^{q^2})z^{q+1}+a^{q^2+1}z^{2q}}+1}\\
           &=\frac{Ax^2+Bx+C}{Dx^2+Ex+F}\in \mathbb{F}_q(x),
\end{aligned}
\end{eqnarray}
where 
\begin{eqnarray}\label{18}
    A=z^q(a^q+1)(a^{q^3}+1)+z(a^{q^2}+1)(a+1),
\end{eqnarray}
\begin{eqnarray}\label{19}
    B=z^q(a^q+a^{q^3})(z+z^q)+z(a+a^{q^2})(z+z^q),
\end{eqnarray}
\begin{eqnarray}\label{20}
    C=z^q(a^{q^3+q}z^2+(a^q+a^{q^3})z^{q+1}+z^{2q})+z(a^{q^2+1}z^{2q}+(a+a^{q^2})z^{q+1}+z^2),
\end{eqnarray}
\begin{eqnarray}\label{21}
    D=(a^q+1)(a^{q^3}+1)+(a^{q^2}+1)(a+1),
\end{eqnarray}
\begin{eqnarray}\label{22}
    E=(a^q+a^{q^3})(z+z^q)+(a+a^{q^2})(z+z^q),
\end{eqnarray}
\begin{eqnarray}\label{23}
    F=a^{q^3+q}z^2+(a^q+a^{q^3})z^{q+1}+z^{2q}+z^2+(a+a^{q^2})z^{q+1}+a^{q^2+1}z^{2q}.
\end{eqnarray}

If $a\in \mu_{q^2-1}$, then $l_3 (x) = \frac{ ((a^q+1)^2z^q+(a+1)^2z)  x^2 + z^q(a^{2q}z^2 + z^{2q}) + z(a^2z^{2q} + z^2)}{((a^q+1)^2 +(a+1)^2 )x^2 + (a^{2q}z^2 + z^{2q} + z^2 + a^2z^{2q})}$ is a composition of degree-one rational function $\frac{Gx+H}{Mx+N}$ and $x^2$, 
where 
\begin{eqnarray}\label{37}
    G=(a^q+1)^2z^{q}+(a+1)^2z,
\end{eqnarray}
\begin{eqnarray}\label{38}
    H=z^q(a^{2q}z^2+z^{2q})+z(a^2z^{2q}+z^2),
\end{eqnarray}
\begin{eqnarray}\label{39}
    M=(a^q+1)^2+(a+1)^2,
\end{eqnarray}
\begin{eqnarray}\label{40}
    N=a^{2q}z^2+z^{2q}+z^2+a^2z^{2q}.
\end{eqnarray}

Next we show $\frac{Gx+H}{Mx+N}$ is invertible over $\mathbb{F}_q(x)$. It suffices to check $GN-HM\neq 0$. Indeed,
\begin{eqnarray}\label{41}
    \begin{aligned}
        GN-HM&=(L^{q}z^{q}+Lz)(L^{q}z^2+Lz^{2q})+(L+L^{q})(a^2z^{2q+1}+z^3+z^{3q}+a^{2q}z^{q+2})\\
             &=(L^{q+1}+L^{q}+L)z^{3q}+(L^2+a^2(L+L^{q}))z^{2q+1}\\
             &\hspace{1em}+(L^{2q}+a^{2q}(L+L^{q}))z^{q+2}+(L^{q+1}+L^{q}+L)z^3,\\
    \end{aligned}
\end{eqnarray}
where $L=(a+1)^2$.

%\begin{eqnarray}\label{42}
 %   L=(a+1)^2.
%\end{eqnarray}

From Equation (\ref{41}), we know $(GN-HM)(z)$ is a polynomial, whose degree is $3q$.  We note that we can choose any $z\in \mathbb{F}_{q^2}\setminus\mathbb{F}_{q}$, whose cardinality is $q^2-q$. When $q^2-q>3q$, i.e. $q>4$, if all such $z$'s are zeroes of $GN-HM$, then $(GN-HM)(z)$ is a zero polynomial by the fundamental theorem of algebra. From the fact that $(GN-HM)(z)$ is a zero polynomial, we have
\begin{eqnarray}\label{43}
	\left\{\begin{array}{c}
      a^2(L+L^{q})=L^2,\\
      L+L^{q}=L^{q+1}.
\end{array}
	\right.
\end{eqnarray}

Plugging $L=(a+1)^2$ into the second equation of (\ref{43}), we get $a^{2(q+2)}=1$, which is a contradiction because $a\not \in \mu_{q+1}$. Thererfore $GN-HM\neq 0$ and thus $l_3(x)$ permutes $\fq\cup\{\infty\}$ for $q>4$. Hence $g_3(x)$ is a permutation polynomial when $q>4$.  The cases for small $q$'s can be verified directly or by a computer search.

Conversely, we first prove at least one of  $A$ and $D$ in Equation~(\ref{17}) is nonzero . Assume both $A$ and $D$ are $0$, we have
\begin{eqnarray}\label{44}
	\left\{\begin{array}{c}
     z^{q}(a^{q}+1)(a^{q^3}+1)+z(a^{q^2}+1)(a+1)=0,\\
     (a^{q}+1)(a^{q^3}+1)+(a^{q^2}+1)(a+1)=0.\\
\end{array}
	\right.
\end{eqnarray}

From Equation (\ref{44}) and $z\not \in \mathbb{F}_{q}$, we have $a=1$, which is a contradiction. It is well known that  a degree-two rational function $f(x)\in \mathbb{F}_{q}(x)$ permutes $\mathbb{P}^{1}(\mathbb{F}_q)$ if and only if $q$ is even and $f(x)$ is equivalent to $x^2$. Specifically, there exist some degree one $\mu,\nu\in \mathbb{F}_{q}(x)$ such that $f=\mu\circ x^2\circ \nu$. 
Applying this result to the permutation rational function $l_3(x)$, we must have $B=E=0$. From $E=0$, we get
\begin{eqnarray}\label{24}
    a+a^{q^2}=a^{q}+a^{q^3}.
\end{eqnarray}

Plugging it into the equation $B=0$, we get
\begin{eqnarray}\label{25}
    (a+a^{q^2})(z+z^q)^2=0.
\end{eqnarray}

From Equation (\ref{25}), we get $a=a^{q^2}$. In other words, $a\in \mu_{q^2-1}$, because $a$ is nonzero.  
\end{proof}

%To explain $x^{171}+ax$'s permutation behavior in $\mathbb{F}_{2^8}$, we obtain a nonexistence result in large $q$.

Finally we prove   a nonexistence result over $\mathbb{F}_{q^4}$. 

\begin{thm}\label{T4}
    Let $q=2^n$, $n$ even, and $n\geq 4$. Then the polynomial

    \[
    f_4(x)=x^{2q^3+2q^2+2q+3}+ax
    \] 
is not a permutation polynomial over $\mathbb{F}_{q^4}$ for any $a\in \mathbb{F}_{q^4}^{*}$.
\end{thm}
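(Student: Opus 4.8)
The plan is to use Hermite's criterion to rule out $f_4(x) = x^{2q^3+2q^2+2q+3}+ax$ being a PP over $\mathbb{F}_{q^4}$, following closely the computational strategy of Theorems~\ref{T3} and \ref{T1}. Since $\gcd(2q^3+2q^2+2q+2, q^4-1) = 2(q^3+q^2+q+1)\cdot\frac{\gcd(\cdots)}{\cdots}$ needs to be checked, but in any case the index of $f_4$ is $q-1$ (as stated), so we write $f_4(x) = x(x^{(q-1)(q^3+q^2+q+1)\cdot ?}+a)$; more simply $2q^3+2q^2+2q+3 = 2(q^3+q^2+q+1)+1$, and $q^3+q^2+q+1 = (q+1)(q^2+1)$, so $i-1 = 2(q+1)(q^2+1)$ and $s = \gcd(i-1, q^4-1)$ gives index $d = (q^4-1)/s = q-1$ when $n$ is even. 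First I would pick a suitable exponent $t$ with $1 \le t \le q^4-2$, $t$ odd, and expand $f_4(x)^t = \sum_i \binom{t}{i} a^{t-i} x^{t + i(i-1)\cdot 2(q+1)(q^2+1)/1}$ — more precisely $x^{t + 2(q^3+q^2+q+1)i}$ — and extract the coefficient of $x^{q^4-1}$ after reduction mod $x^{q^4}-x$. The condition picking out contributing terms is a congruence $t + 2(q^3+q^2+q+1)i \equiv 0 \pmod{q^4-1}$, equivalently a linear congruence on $i$ modulo $(q^4-1)/\gcd(2(q^3+q^2+q+1), q^4-1) = (q-1)/\gcd(\cdots)$, so there are relatively few solutions $i$ in the range $[0,t]$.

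The core of the argument is then the same two-part bookkeeping used earlier: among the solutions $i$, use Lucas' lemma to determine for which of them $\binom{t}{i} \not\equiv 0 \pmod 2$ — this amounts to checking a bit-containment condition between the binary expansion of $i$ and that of $t$. After discarding the vanishing binomials, one is left with a short sum $\sum_{i \in S_0} \binom{t}{i} a^{t-i} = 0$, which (after factoring out a common power of $a$) becomes a sparse polynomial identity in $a$ of the shape $a^{e_1} + a^{e_2} + \cdots = 0$ whose exponents differ by multiples of $(q^4-1)/(q-1) = q^3+q^2+q+1$ or a divisor thereof. The key point to establish for a nonexistence result (as opposed to the earlier classification theorems) is that this forced identity has \emph{no} solution $a \in \mathbb{F}_{q^4}^*$ at all — whereas in Theorems~\ref{T3} and \ref{T1} the analogous identity had solutions, here I would choose $t$ (possibly combining two different values of $t$, say one giving a constraint on $a^{q+1}$ and another on $a^{q^2-1}$ or $a^{q^3+q^2+q+1}$) so that the constraints are jointly contradictory, e.g. forcing simultaneously $a \in \mu_{q+1}$ and $a^{q^3+q^2+q+1}$ to be a fixed nontrivial cube root of unity, which is impossible when $3 \mid q^2-1$ (true since $n$ even) or by a parity/order argument using $n \ge 4$.

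I expect the main obstacle to be the careful Lucas-lemma bit analysis: determining exactly which $i$ in the solution set $S$ satisfy the bit-containment condition requires writing out the $2$-adic expansions of $t$ and of each candidate $i$ (which themselves involve the expansions of fractions like $\frac{q-1}{3}$, valid because $n$ even forces $3 \mid q-1$), and this is exactly the step that was delicate in the proofs of Theorems~\ref{T3} and \ref{T1}. A secondary obstacle is choosing the right exponent(s) $t$: I would first try $t = 2(q^3+q^2+q+1) - 2$ or $t$ a small multiple of $q-1$ minus a correction, by analogy with the $t = 3q-3$ and $t = 2q^2-q-1$ choices above, compute the resulting constraint, and iterate until the constraints on $a$ become contradictory. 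If a single $t$ does not suffice, the fallback is to combine the vanishing of the $x^{q^4-1}$-coefficient for two well-chosen $t$'s, or to supplement Hermite's criterion with the AGW-criterion reduction to $\mu_{q-1}$ (as in the converse directions above) and a direct analysis of the induced map on roots of unity, which for index $q-1$ should be tractable. The hypothesis $n \ge 4$ (rather than $n \ge 2$) is presumably needed to make the range bound $i \le t$ leave enough room for the Lucas analysis to behave uniformly, and/or to guarantee the contradiction in the final step; I would keep track of exactly where it enters.
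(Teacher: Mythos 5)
Your proposal is a plan rather than a proof, and the plan has a genuine gap at its core: you never identify a concrete exponent $t$ (or pair of exponents) for which the Hermite coefficient condition yields constraints on $a$ that are provably unsatisfiable, and there is no evidence that such a $t$ exists. For a nonexistence result the burden is exactly opposite to Theorems~\ref{T3} and \ref{T1}: there one only needed the forced identity to cut down the admissible $a$'s, whereas here you must show that \emph{every} $a\in\mathbb{F}_{q^4}^*$ is excluded, and ``iterate until the constraints become contradictory'' is the entire content of the theorem, left undone. A concrete warning sign that no uniform Hermite/Lucas identity can do this: by Corollary~\ref{cor2} the statement is \emph{false} for $q=4$ (i.e.\ $n=2$), where $x^{171}+ax$ does permute $\mathbb{F}_{2^8}$ for suitable $a$. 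So any argument must use $n\ge 4$ essentially; your guess that $n\ge 4$ merely makes the bit bookkeeping ``behave uniformly'' misses this, and a $t$-independent-of-$n$ contradiction valid for all even $n$ would prove too much.

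The paper's route is entirely different and explains why a direct Hermite computation over $\mathbb{F}_{q^4}$ is not the natural tool here. It first applies the AGW criterion with $\lambda(x)=x^{q^3+q^2+q+1}$ (close to your fallback remark), reducing the problem to whether $g_4(x)=x(x^2+a)^{q^3+q^2+q+1}$ permutes $\mathbb{F}_q^*$; on $\mathbb{F}_q$ this is the degree-$9$ polynomial $x^9+Ax^7+Bx^5+Cx^3+Dx$ with $A,B,C,D\in\mathbb{F}_q$ the elementary symmetric functions of the conjugates of $a$. Since $n$ is even, $3\mid q-1$, so $\gcd(9,q-1)>1$ and the Carlitz--Wan theorem says such a polynomial cannot be exceptional; but Lemma~\ref{PPEP} says a permutation polynomial of degree at most $q^{1/4}$ is exceptional, so no such permutation exists once $q\ge 9^4=6561$. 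The finitely many remaining even $n$ with $4\le n<12$ are then dispatched by Hermite's criterion ($9\mid 4095$ for $q=4096$) and a computer search. If you want to salvage your approach, the realistic path is to adopt this reduction to $\mathbb{F}_q$ and the exceptional-polynomial bound; trying to run Lucas' lemma on $\binom{t}{i}$ over $\mathbb{F}_{q^4}$ for all even $n\ge 4$ simultaneously is unlikely to close.
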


\begin{proof}
    By the AGW criterion, we have the following diagram:

    \begin{center}
\begin{tikzcd}
\mathbb{F}_{q^4}^{*} \arrow[rrrr, "f_4(x)=x^{2q^3+2q^2+2q+3}+ax"] \arrow[dd, "x^{q^3+q^2+q+1}"] &  &  &  & \mathbb{F}_{q^4}^{*} \arrow[dd, "x^{q^3+q^2+q+1}"] \\
                                                                                        &  &  &  &                                                 \\
\mathbb{F}_q^* \arrow[rrrr, "g_4(x)=x(x^2+a)^{q^3+q^2+q+1}"]                                       &  &  &  & \mathbb{F}_q^*                                    
\end{tikzcd}
    \end{center}

Therefore $f_4(x)=x^{2q^3+2q^2+2q+3}+ax$ is a permutation polynomial if and only if $g_4(x)=x(x^2+a)^{q^3+q^2+q+1}$ permutes $\mathbb{F}_{q}^{*}$.

 Let us consider $G(x)=x^9+Ax^7+Bx^5+Cx^3+Dx$, where $A, B,C,D\in \mathbb{F}_q$. Assume $G(x)$ is a permutation polynomial over $\mathbb{F}_{q}$.  Since $\text{gcd}(9,q-1)>1$, by the Carlitz-Wan conjecture, $G(x)$ is not exceptional. By Lemma~\ref{PPEP},  if  $q^{\frac{1}{4}}\geq 9$, i.e. $q\geq 6561$, then $G(x)$ is exceptional, which is impossible. Hence $q< 6561$.  By Hermite's criterion, $G(x)$ is not a permutation polynomial over $\mathbb{F}_{4096}$ since $9\mid 4095$. As for $4\leq m<12$, by a computer search, no permutation polynomials of the form $x^{2q^3+2q^2+2q+3}+ax$ exist. Hence we complete the proof. 
\end{proof}

By a computer search, for some $a\in \mathbb{F}_{2^8}$, $x^{171}+ax$ can be a permutation binomial. Hence we have the following corollary:

\begin{cor}\label{cor2}
    Let $q=2^n$, $n$ even. Then the polynomial $x^{2q^3+2q^2+2q+3}+ax$ can be a permutation binomial over $\mathbb{F}_{q^4}$ only when $q=4$. 
\end{cor}
\section{Discussion}

With these newly constructed permutation binomials and previous results in the literature, we can classify all permutation binomials of the form  $x^i+ax$ over $\mathbb{F}_{2^n}$, where $n\leq 8$. 

For any linearized binomial $x^{2^j} + ax$, it is a PP over $\mathbb{F}_{2^n}$  if and only if it has only trivial root. Hence $x^{2^j-1} + a \neq 0$.  This is equivalent to say that  $a$ is not a ($2^{\gcd(j, n)}-1$)-th power in $\mathbb{F}_{2^n}$. Hence we only need to focus on the non-linearizd PPs over $\mathbb{F}_{2^n}$. By a computer search, when $n=4$, only linearized permutation polynomials exist. For this reason, the smallest case that allows the non-linearized PPs of this form is $n=6$. %(not $n=4$, check? or say something all such pps over smaller n are linearized?)

We note that  there are no permutation binomials of the form $x^i+ax$ in $\mathbb{F}_{2^5}$, $\mathbb{F}_{2^7}$ respectively. Indeed, using Corollary~3 in \cite{TNOFPB2006},  we can explain that there  is no permutation binomial  over $\mathbb{F}_{2^5}$ or $\mathbb{F}_{2^7}$ because $31,127$ are Mersenne primes. 

\iffalse%%%

In $\mathbb{F}_{2^7}$, because $127$ is a prime, we have used \cite{TNOFPB2006}'s result to illustrate why there is no permutation binomial in $\mathbb{F}_{2^7}$.

First, there are many linear permutation binomials in $\mathbb{F}_{2^6}$ or $\mathbb{F}_{2^8}$. These linearized permutation binomials are easily determined. So we only need to consider those permutation binomials which are nonlinear permutation binomials.
\fi%%%%

In $\mathbb{F}_{2^6}$, we can use Theorem \ref{T1} and Corollary \ref{cor1} or \cite{PPIFPOSASCSOMOLS2013} to explain when $x^{10}+ax$ and $x^{19}+ax$ can be permutation binomials. As for $x^{22}+ax$, we use the result of \cite{DOATOPBOFF2015} to explain. As for the last polynomial $x^{43}+ax$ in Table \ref{Table1}, we can use Theorem \ref{T3} to explain its permutation behavior.

In $\mathbb{F}_{2^8}$, we can use the result in  \cite{OOCOPPOFFCT2015} to explain when  $x^{86}+ax$ is a permutation binomial. As for $x^{154}+ax$ and $x^{171}+ax$, we can use Theorem \ref{T2} and Corollary \ref{cor2} to demonstrate that they are indeed permutation binomials. Thus we have provided a full explanation  of permutation binomials of the form $x^i+ax$ over $\mathbb{F}_{2^n}$, where $n\leq 8$.

\begin{rmk}
    As for $x^{366}+ax$ over $\mathbb{F}_{2^9}$, from the AGW criterion, we know $x^{366}+ax$ is a permutation binomial over $\mathbb{F}_{2^9}$ if and only if $x(x^5+a^{64})(x^5+a^8)(x^5+a)$ permutes $\mathbb{F}_{2^3}$. Using $5\times 3\equiv 1\pmod 7$, the latter is equivalent to $x^3(x+a^{64})(x+a^8)(x+a)$ permutes $\mathbb{F}_{2^3}$. Hence, we can use the classification of permutation polynomial of degree $6$ \cite{PPODOFFOC2010,PPAOPODS2013} or Hermite's criterion to obtain conditions on the coefficient $a$ so that $x^{366}+ax$ permutes $\mathbb{F}_{2^9}$.  Indeed, $x^{366}+ax$ permutes $\mathbb{F}_{2^9}$ if and only if $a\not\in \mathbb{F}_{2^3}$, $(a^2+a^{16}+a^{128})(a^{65}+a^{9}+a^{72})+(a^{130}+a^{18}+a^{144})=0$ and $(a^{65}+a^{9}+a^{72})+(a^{260}+a^{36}+a^{288})(a+a^{8}+a^{64})=0$, or $a^{65}+a^9+a^{72}=0$ and $\frac{a^{73}}{(a+a^{8}+a^{64})^3}$ is a root of $x^3+x+1$ over $\mathbb{F}_{2^9}$. Similarly, when $3e\equiv 1\pmod{(q-1)}$, using the classification of permutation polynomial of degree $6$, we can get similar conditions on $a$ so that $x(x^{e(q^2+q+1)}+a)$ permutes $\mathbb{F}_{q^3}$. 
    %Fill in more details on the conditions?  One may wonder whether it is possible to extend this example to a class of PPs of the form $x(x^{e(q^2+q+1)} + a )$ over $\mathbb{F}_{q^3}$. 
\end{rmk}

\section{Conclusion}

In this paper, we gave a full explanation of permutation binomials of the form $x^i+ax$ over $\mathbb{F}_{2^n}$ where $n\leq 8$, using new and known classes of PPs. In particular, we gave three classes of permutation binomials over $\mathbb{F}_{2^n}$ and a nonexistence result. It would be interesting to follow this path to complete the classification for arbitrary $n$, as well as give a classification of the form $x^i+ax$ over $\mathbb{F}_{p^n}$, where $p$ is an odd prime number.

\bibliographystyle{elsarticle-num}\biboptions{sort&compress,longnamesfirst}
\bibliography{ffa-refs}
\end{document}